\newtheorem{theorem}{Theorem}
\newtheorem{axiom}{Axiom}
\newtheorem{corollary}[theorem]{Corollary}
\newtheorem{definition}[axiom]{Definition}
\newtheorem{lemma}[theorem]{Lemma}
\newenvironment{remark}{\rem\rm}{\endrem}
\newcounter{unnumber}
\newenvironment{proof}{\prf\rm}{\hfill{$\blacksquare$}\endprf}
\newcommand{\R}{\mathbb{R}}%
\newcommand{\N}{\mathbb{N}}%
\newcommand{\ol}{\overline}%
\renewcommand{\>}{\right\rangle}
\DeclareMathOperator*\dom{dom}%
\DeclareMathOperator*\id{Id}%
\DeclareMathOperator*\prox{prox}%
\DeclareMathOperator*\argmin{argmin}
\DeclareMathOperator*\crit{crit}
\DeclareMathOperator*\dist{dist}
\DeclareMathOperator*\proj{proj}
\title{A forward-backward dynamical approach to the minimization of the sum of a nonsmooth convex with a smooth nonconvex function}
\author{Radu Ioan Bo\c{t} \thanks{University of Vienna, Faculty of Mathematics, Oskar-Morgenstern-Platz 1, A-1090 Vienna, Austria,
email: radu.bot@univie.ac.at.} \and
Ern\"{o} Robert Csetnek \thanks {University of Vienna, Faculty of Mathematics, Oskar-Morgenstern-Platz 1, A-1090 Vienna, Austria,
email: ernoe.robert.csetnek@univie.ac.at. Research supported by FWF (Austrian Science Fund), Lise Meitner Programme, project M 1682-N25.}}
\begin{document}
\maketitle

\noindent \textbf{Abstract.} We address the minimization of the sum of a proper, convex and lower 
semicontinuous with a (possibly nonconvex) smooth function from the perspective of  an implicit dynamical system of forward-backward type. The latter is formulated by means of the gradient of the smooth function and of the proximal point operator  of the nonsmooth one. The trajectory generated by the dynamical system is proved to asymptotically converge to a critical point of the objective, provided a regularization of the latter satisfies the  Kurdyka-\L{}ojasiewicz property. Convergence rates for the trajectory in terms of the \L{}ojasiewicz exponent of the regularized  objective function are also provided. 
\vspace{1ex}

\noindent \textbf{Key Words.} dynamical systems, continuous forward-backward method, nonsmooth optimization,  limiting subdifferential, Kurdyka-\L{}ojasiewicz property
\vspace{1ex}

\noindent \textbf{AMS subject classification.} 34G25, 47J25, 47H05, 90C26, 90C30, 65K10 

\section{Introduction}\label{sec-intr}
In this paper we approach the solving of the optimization problem 
\begin{equation}\label{intr-opt-pb}  \ \inf_{x\in\R^n}[f(x)+g(x)], \end{equation}
where $f:\R^n\to \R\cup\{+\infty\}$ is a proper, convex, lower semicontinuous function and 
$g:\R^n\to \R$ a (possibly nonconvex) Fr\'{e}chet differentiable function with $\beta$-Lipschitz continuous gradient for $\beta \geq 0$, i.e., $\|\nabla g(x) - \nabla g(y) \| \leq \|x-y\| \ \forall x,y \in \R^n$, by associating to it the 
implicit dynamical system 
\begin{equation}\label{intr-dyn-syst}\left\{
\begin{array}{ll}
\dot x(t) +  x(t)=\prox_{\eta f}\Big(x(t)-\eta\nabla g(x(t))\Big)\\
x(0)=x_0,
\end{array}\right.\end{equation}
where $\eta>0$, $x_0\in\R^n$ is chosen arbitrary and 
$\prox_{\eta f}:\R^n\rightarrow\R^n$, defined by $$\prox\nolimits_{\eta f}(y)=\argmin_{u\in\R^n}\left\{f(u)+\frac{1}{2\eta}\|u-y\|^2\right\}$$
is the \emph{proximal point operator} of $\eta f$. 

Due to the Lipschitz property of the proximal point operator, the existence and uniqueness of strong global solutions of the dynamical system \eqref{intr-dyn-syst} is ensured in the framework of the Cauchy-Lipschitz Theorem. 

The asymptotic analysis of the trajectories is carried out in the setting of functions satisfying the \emph{Kurdyka-\L{}ojasiewicz}  property (so-called \emph{KL functions}). To this large class belong functions with different analytic features. The techniques for proving the asymptotic convergence of the trajectories generated by \eqref{intr-dyn-syst} towards a critical point of the objective function of \eqref{intr-opt-pb}, expressed as a zero of the limiting (Mordukhovich) subdifferential, use three main ingredients (see \cite{b-sab-teb, att-b-sv2013, alv-att-bolte-red} for a similar approach in the discrete setting). Namely, we show a sufficient decrease property along the trajectories of a regularization of the objective function, the existence of a subgradient lower bound for the trajectories and,
finally, we obtain convergence by making use of the Kurdyka-\L{}ojasiewicz property of the objective function. The case when  the objective function is semi-algebraic follows as particular case of our analysis.
We close our investigations by establishing convergence rates for the trajectories expressed in terms of 
the \L{}ojasiewicz exponent of the regularized objective function. 

Let us mention that in the context of minimizing a (nonconvex) smooth function (wich corresponds to the case when in \eqref{intr-opt-pb} $f(x) = 0$ for all $x\in\R^n$) several first- and second-order gradient type dynamical systems have been investigated by  \L{}ojasiewicz \cite{lojasiewicz1963}, Simon \cite{simon}, Haraux and Jendoubi \cite{h-j}, Alvarez, Attouch, Bolte and  Redont \cite[Section 4]{alv-att-bolte-red}, Bolte, Daniilidis and Lewis \cite[Section 4]{b-d-l2006}, etc. In the aforementioned papers, the convergence of the trajectories  is obtained in the framework of KL functions. 

In what concerns implicit dynamical systems of the same type like \eqref{intr-dyn-syst}, let us first mention
that Bolte has studied in \cite{bolte-2003} the asymptotic convergence of the trajectories of  
\begin{equation}\label{intr-syst-bolte}\left\{
\begin{array}{ll}
\dot x(t)+x(t)=\proj_C\big(x(t)-\eta\nabla g(x(t))\big)\\
x(0)=x_0.
\end{array}\right.\end{equation}
where $g :\R^n \rightarrow \R^n$ is convex and differentiable with Lipschitz continuous gradient and $\proj_C$ denotes the projection operator on the nonempty, closed and convex set $C\subseteq\R^n$, towards a minimizer of $g$ over $C$. This corresponds to the case when in \eqref{intr-opt-pb} $f$ is the indicator function of $C$, namely,  $f(x)=0$ for $x\in C$ and $+\infty$ otherwise. We refer also to the work of Antipin \cite{antipin} for more statements and results concerning the dynamical system \eqref{intr-syst-bolte}. The approach of  \eqref{intr-opt-pb} by means of \eqref{intr-dyn-syst}, stated as a generalization of \eqref{intr-syst-bolte}, has been recently considered by Abbas and Attouch in \cite[Section 5.2]{abbas-att-arx14} in the full convex setting. Implicit dynamical systems related to both optimization problems and monotone inclusions have been  considered in the literature also by Attouch and Svaiter in \cite{att-sv2011}, Attouch, Abbas and Svaiter in \cite{abbas-att-sv} and  Attouch, Alvarez and Svaiter in \cite{att-alv-sv}. These investigations have been continued and extended in \cite{bb-ts-cont, b-c-dyn-KM, b-c-dyn-pen, b-c-dyn-sec-ord, b-c-conv-rate-cont}. 

Finally, we would like to emphasize that the time discretization of \eqref{intr-dyn-syst} leads to forward-backward iterative algorithm
\begin{equation}\label{discr}x_{k+1}=\prox\nolimits_{\eta f}(x_k-\eta\nabla g (x_k)) \ \forall k \geq 0,\end{equation}
where the starting point $x_0\in\R^n$ is arbitrarily chosen. This splitting method, which is in the convex setting well-understood (see for example 
\cite{bauschke-book}), has been investigated in several papers in the nonconvex setting for KL functions, too.  We refer the reader to 
\cite{attouch-bolte2009, att-b-red-soub2010, att-b-sv2013, b-sab-teb, b-c-inertial-nonc-ts, bcl, c-pesquet-r, f-g-peyp, h-l-s-t, ipiano}  for different techniques and ideas used for carrying out the convergence analysis 
of iterative schemes of same type like \eqref{discr} in the nonconvex setting. 

\section{Preliminaries}\label{sec2}

In this section we recall some notions and results which are needed throughout the paper. Let $\N= \{0,1,2,...\}$ be the set of nonnegative integers. For $n\geq 1$, the Euclidean scalar product and the induced norm on $\R^n$
are denoted by $\langle\cdot,\cdot\rangle$ and $\|\cdot\|$, respectively. Notice that all the finite-dimensional spaces considered in the  manuscript are endowed with the topology induced by the Euclidean norm. 

The {\it domain} of the function  $f:\R^n\rightarrow \R\cup\{+\infty\}$ is defined by $\dom f=\{x\in\R^n:f(x)<+\infty\}$. We say that $f$ is {\it proper} if $\dom f\neq\emptyset$.  
For the following generalized subdifferential notions and their basic properties we refer to \cite{boris-carte, rock-wets}. 
Let $f:\R^n\rightarrow \R\cup\{+\infty\}$ be a proper and lower semicontinuous function. If $x\in\dom f$, we consider the {\it Fr\'{e}chet (viscosity)  
subdifferential} of $f$ at $x$ as the set $$\hat{\partial}f(x)= \left \{v\in\R^n: \liminf_{y\rightarrow x}\frac{f(y)-f(x)-\<v,y-x\>}{\|y-x\|}\geq 0 \right \}.$$ For 
$x\notin\dom f$ we set $\hat{\partial}f(x):=\emptyset$. The {\it limiting (Mordukhovich) subdifferential} is defined at $x\in \dom f$ by 
$$\partial f(x)=\{v\in\R^n:\exists x_k\rightarrow x,f(x_k)\rightarrow f(x)\mbox{ and }\exists v_k\in\hat{\partial}f(x_k),v_k\rightarrow v \mbox{ as }k\rightarrow+\infty\},$$
while for $x \notin \dom f$, one takes $\partial f(x) :=\emptyset$. Therefore  $\hat\partial f(x)\subseteq\partial f(x)$ for each $x\in\R^n$.

Notice that in case $f$ is convex, these subdifferential notions coincide with the {\it convex subdifferential}, thus 
$\hat\partial f(x)=\partial f(x)=\{v\in\R^n:f(y)\geq f(x)+\<v,y-x\> \ \forall y\in \R^n\}$ for all $x\in\R^n$. 

Th graph of the limiting subdifferential fulfills the following closedness criterion: if $(x_k)_{k\in\N}$ and $(v_k)_{k\in\N}$ are sequences in $\R^n$ such that 
$v_k\in\partial f(x_k)$ for all $k\in\N$, $(x_k,v_k)\rightarrow (x,v)$ and $f(x_k)\rightarrow f(x)$ as $k\rightarrow+\infty$, then 
$v\in\partial f(x)$. 

The Fermat rule reads in this nonsmooth setting as follows: if $x\in\R^n$ is a local minimizer of $f$, then $0\in\partial f(x)$.  We denote by 
$$\crit(f)=\{x\in\R^n: 0\in\partial f(x)\}$$ the set of {\it (limiting)-critical points} of $f$. 

When $f$ is continuously differentiable around $x \in \R^n$ we have $\partial f(x)=\{\nabla f(x)\}$. We will make use of the following subdifferential sum rule:
if $f:\R^n\rightarrow\R\cup\{+\infty\}$ is proper and lower semicontinuous  and $h:\R^n\rightarrow \R$ is a continuously differentiable function, then $\partial (f+h)(x)=\partial f(x)+\nabla h(x)$ for all $x\in\R^m$. 

We turn now our attention to functions satisfying the {\it Kurdyka-\L{}ojasiewicz property}. This class of functions will play 
a crucial role in the asymptotic analysis of the dynamical system  \eqref{intr-dyn-syst}. For $\eta\in(0,+\infty]$, we denote by $\Theta_{\eta}$ the class of concave and continuous functions 
$\varphi:[0,\eta)\rightarrow [0,+\infty)$ such that $\varphi(0)=0$, $\varphi$ is continuously differentiable on $(0,\eta)$, continuous at $0$ and $\varphi'(s)>0$ for all 
$s\in(0, \eta)$. In the following definition (see \cite{att-b-red-soub2010, b-sab-teb}) we use also the {\it distance function} to a set, defined for $A\subseteq\R^n$ as $\dist(x,A)=\inf_{y\in A}\|x-y\|$  
for all $x\in\R^n$. 

\begin{definition}\label{KL-property} \rm({\it Kurdyka-\L{}ojasiewicz property}) Let $f:\R^n\rightarrow\R\cup\{+\infty\}$ be a proper and lower semicontinuous 
function. We say that $f$ satisfies the {\it Kurdyka-\L{}ojasiewicz (KL) property} at $\ol x\in \dom\partial f=\{x\in\R^n:\partial f(x)\neq\emptyset\}$, if there exist $\eta \in(0,+\infty]$, a neighborhood $U$ of $\ol x$ and a function $\varphi\in \Theta_{\eta}$ such that for all $x$ in the 
intersection 
$$U\cap \{x\in\R^n: f(\ol x)<f(x)<f(\ol x)+\eta\}$$ the following inequality holds 
$$\varphi'(f(x)-f(\ol x))\dist(0,\partial f(x))\geq 1.$$
If $f$ satisfies the KL property at each point in $\dom\partial f$, then $f$ is called {\it KL function}. 
\end{definition}

The origins of this notion go back to the pioneering work of \L{}ojasiewicz \cite{lojasiewicz1963}, where it is proved that for a real-analytic function 
$f:\R^n\rightarrow\R$ and a critical point $\ol x\in\R^n$ (that is $\nabla f(\ol x)=0$), there exists $\theta\in[1/2,1)$ such that the function 
$|f-f(\ol x)|^{\theta}\|\nabla f\|^{-1}$ is bounded around $\ol x$. This corresponds to the situation when $\varphi(s)=Cs^{1-\theta}$, where 
$C>0$. The result of 
\L{}ojasiewicz allows the interpretation of the KL property as a re-parametrization of the function values in order to avoid flatness around the 
critical points. Kurdyka \cite{kurdyka1998} extended this property to differentiable functions definable in o-minimal structures. 
Further extensions to the nonsmooth setting can be found in \cite{b-d-l2006, att-b-red-soub2010, b-d-l-s2007, b-d-l-m2010}. 

One of the remarkable properties of the KL functions is their ubiquity in applications (see \cite{b-sab-teb}). To the class of KL functions belong semi-algebraic, real sub-analytic, semiconvex, uniformly convex and 
convex functions satisfying a growth condition. We refer the reader to 
\cite{b-d-l2006, att-b-red-soub2010, b-d-l-m2010, b-sab-teb, b-d-l-s2007, att-b-sv2013, attouch-bolte2009} and the references therein for more on KL functions and illustrating examples. 

An important role in our convergence analysis will be played by the following uniformized KL property given in \cite[Lemma 6]{b-sab-teb}.

\begin{lemma}\label{unif-KL-property} Let $\Omega\subseteq \R^n$ be a compact set and let $f:\R^n\rightarrow\R\cup\{+\infty\}$ be a proper 
and lower semicontinuous function. Assume that $f$ is constant on $\Omega$ and that it satisfies the KL property at each point of $\Omega$.   
Then there exist $\varepsilon,\eta >0$ and $\varphi\in \Theta_{\eta}$ such that for all $\ol x\in\Omega$ and all $x$ in the intersection 
\begin{equation}\label{int} \{x\in\R^n: \dist(x,\Omega)<\varepsilon\}\cap \{x\in\R^n: f(\ol x)<f(x)<f(\ol x)+\eta\}\end{equation} 
the inequality  \begin{equation}\label{KL-ineq}\varphi'(f(x)-f(\ol x))\dist(0,\partial f(x))\geq 1.\end{equation}
holds.
\end{lemma}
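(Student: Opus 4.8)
The plan is to upgrade the pointwise KL property, available at every point of $\Omega$, to a uniform one by a standard compactness argument, and then to glue the individually obtained desingularizing functions into a single $\varphi \in \Theta_\eta$ by summation.

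First I would record that, since $f$ is constant on $\Omega$, the value $\ell := f(\ol x)$ is the same real number for every $\ol x \in \Omega$, and that $\Omega \subseteq \dom\partial f$ because the KL property is assumed at each of its points. For each $u \in \Omega$, the KL property at $u$ provides $\varepsilon_u > 0$, $\eta_u \in (0,+\infty]$ and $\varphi_u \in \Theta_{\eta_u}$ such that $\varphi_u'(f(x)-\ell)\dist(0,\partial f(x)) \geq 1$ whenever $\|x-u\| < \varepsilon_u$ and $\ell < f(x) < \ell + \eta_u$. The open balls $B(u,\varepsilon_u/2) := \{y\in\R^n : \|y-u\| < \varepsilon_u/2\}$, $u \in \Omega$, form an open cover of the compact set $\Omega$; I would extract a finite subcover indexed by $u_1,\dots,u_p \in \Omega$, and then set $\varepsilon := \tfrac12\min_{1\le i\le p}\varepsilon_{u_i} > 0$, $\eta := \min_{1\le i\le p}\eta_{u_i} \in (0,+\infty]$, and $\varphi := \sum_{i=1}^p \varphi_{u_i}\big|_{[0,\eta)}$, which is legitimate since $\eta \le \eta_{u_i}$ for every $i$.

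Next I would verify that $\varphi \in \Theta_\eta$: a finite sum of concave continuous functions on $[0,\eta)$ that vanish at $0$, are continuously differentiable on $(0,\eta)$ and have strictly positive derivative there shares all these properties; in particular $\varphi'(s) = \sum_{i=1}^p \varphi_{u_i}'(s) \ge \varphi_{u_j}'(s) > 0$ for each $j$ and each $s \in (0,\eta)$. Finally I would check the uniformized inequality \eqref{KL-ineq}: given $\ol x \in \Omega$ and $x$ in the intersection \eqref{int}, I choose $u \in \Omega$ with $\|x-u\| < \varepsilon$ and an index $i$ with $\|u - u_i\| < \varepsilon_{u_i}/2$; the triangle inequality gives $\|x - u_i\| < \varepsilon + \varepsilon_{u_i}/2 \le \varepsilon_{u_i}$, while $\ell < f(x) < \ell + \eta \le \ell + \eta_{u_i}$ and $\ell = f(u_i)$. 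Hence the KL property at $u_i$ yields $\varphi_{u_i}'(f(x)-\ell)\dist(0,\partial f(x)) \ge 1$, and since $\varphi'(f(x)-\ell) \ge \varphi_{u_i}'(f(x)-\ell)$ and $f(x) - \ell = f(x) - f(\ol x)$, the inequality $\varphi'(f(x)-f(\ol x))\dist(0,\partial f(x)) \ge 1$ follows (the case $\partial f(x) = \emptyset$, i.e.\ $\dist(0,\partial f(x)) = +\infty$, being trivial).

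There is no genuine difficulty here; the only points demanding attention are bookkeeping. Passing to a \emph{finite} subcover is essential so that $\varepsilon$ and $\eta$ stay strictly positive after taking minima; halving the radii in the cover is precisely what makes the triangle-inequality step close; and one must restrict each $\varphi_{u_i}$ to the common interval $[0,\eta)$, using $\eta \le \eta_{u_i}$, in order to conclude that the sum $\varphi$ lies in $\Theta_\eta$ rather than merely being concave.
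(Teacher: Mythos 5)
Your proof is correct: the paper itself does not prove this lemma but imports it from \cite[Lemma 6]{b-sab-teb}, and your argument --- extract a finite subcover of balls with halved radii, take $\varepsilon$ and $\eta$ as the corresponding minima, and sum the finitely many desingularizing functions restricted to $[0,\eta)$, using $\varphi' \geq \varphi_{u_i}'$ to transfer the pointwise KL inequality --- is essentially the argument given there. The bookkeeping (halved radii for the triangle inequality, restriction to the common interval so that $\varphi\in\Theta_\eta$, and the trivial case $\partial f(x)=\emptyset$) is handled correctly, so nothing is missing.
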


In the following we recall the notion of locally absolutely continuous function and state two of its basic properties.

\begin{definition}\label{abs-cont} \rm (see, for instance, \cite{att-sv2011, abbas-att-sv}) 
A function $x : [0,+\infty) \rightarrow \R^n$ is said to be locally absolutely continuous, if it absolutely continuos on every interval $[0,T]$, where $T > 0$, which means
that one of the following equivalent properties holds:
\begin{enumerate}

\item[(i)] there exists an integrable function $y:[0,T]\rightarrow \R^n$ such that $$x(t)=x(0)+\int_0^t y(s)ds \ \ \forall t\in[0,T];$$

\item[(ii)] $x$ is continuous and its distributional derivative is Lebesgue integrable on $[0,T]$; 

\item[(iii)] for every $\varepsilon > 0$, there exists $\eta >0$ such that for any finite family of intervals $I_k=(a_k,b_k) \subseteq [0,T]$ 
we have the implication
$$\left(I_k\cap I_j=\emptyset \mbox{ and }\sum_k|b_k-a_k| < \eta\right)\Longrightarrow \sum_k\|x(b_k)-x(a_k)\| < \varepsilon.$$
\end{enumerate}
\end{definition}

\begin{remark}\label{rem-abs-cont}\rm\begin{enumerate} \item[(a)] It follows from the definition that an absolutely continuous function is differentiable almost 
everywhere, its derivative coincides with its distributional derivative almost everywhere and one can recover the function from its 
derivative $\dot x=y$ by the integration formula (i). 

\item[(b)] If $x:[0,T]\rightarrow {\cal H}$ is absolutely continuous for $T > 0$ and $B:\R^n\rightarrow \R^n$ is 
$L$-Lipschitz continuous for $L\geq 0$, then the function $z=B\circ x$ is absolutely continuous, too.
This can be easily seen by using the characterization of absolute continuity in
Definition \ref{abs-cont}(iii). Moreover, $z$ is differentiable almost everywhere on $[0,T]$ and the inequality 
$\|\dot z (t)\|\leq L\|\dot x(t)\|$ holds for almost every $t \in [0,T]$.  
\end{enumerate}
\end{remark}

The following two results, which can be interpreted as continuous versions of the quasi-Fej\'er monotonicity for sequences, 
will play an important role in the asymptotic analysis of the trajectories of the dynamical system investigated in this paper. 
For their proofs we refer the reader  to \cite[Lemma 5.1]{abbas-att-sv} and \cite[Lemma 5.2]{abbas-att-sv}, respectively.

\begin{lemma}\label{fejer-cont1} Suppose that $F:[0,+\infty)\rightarrow\R$ is locally absolutely continuous and bounded from below and that
there exists $G\in L^1([0,+\infty))$ such that for almost every $t \in [0,+\infty)$ $$\frac{d}{dt}F(t)\leq G(t).$$ 
Then there exists $\lim_{t\rightarrow \infty} F(t)\in\R$. 
\end{lemma}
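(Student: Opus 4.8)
The plan is to reduce the statement to the elementary fact that a monotone function on $[0,+\infty)$ that is bounded has a limit at infinity. Since $G\in L^1([0,+\infty))$, the function $t\mapsto\int_0^t G(s)\,ds$ is well defined, locally absolutely continuous, and converges as $t\to+\infty$ to the finite number $\int_0^{+\infty}G(s)\,ds$; in particular it is bounded on $[0,+\infty)$. I would then introduce the auxiliary function
$$H(t):=F(t)-\int_0^t G(s)\,ds,$$
which is locally absolutely continuous as a difference of locally absolutely continuous functions, and bounded from below, because $F$ is bounded from below and $\int_0^t G(s)\,ds$ is bounded.

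The next step is to show that $H$ is nonincreasing. By Remark \ref{rem-abs-cont}(a), $H$ is differentiable almost everywhere with $\frac{d}{dt}H(t)=\frac{d}{dt}F(t)-G(t)$, hence $\frac{d}{dt}H(t)\leq 0$ for almost every $t\in[0,+\infty)$ by hypothesis. Applying the integral representation of Definition \ref{abs-cont}(i) to $H$ on an arbitrary interval $[a,b]\subseteq[0,+\infty)$ gives $H(b)-H(a)=\int_a^b\frac{d}{dt}H(t)\,dt\leq 0$, so $H(b)\leq H(a)$. Thus $H$ is nonincreasing and bounded from below, and therefore $\lim_{t\to+\infty}H(t)=:\ell$ exists and is a real number.

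Finally, since $F(t)=H(t)+\int_0^t G(s)\,ds$ for every $t\geq 0$, and both summands on the right-hand side converge as $t\to+\infty$, the limit $\lim_{t\to+\infty}F(t)=\ell+\int_0^{+\infty}G(s)\,ds$ exists in $\R$, which is the claim.

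The only point that needs a little care — and the closest thing to an obstacle — is the passage from ``$\frac{d}{dt}H(t)\leq 0$ almost everywhere'' to ``$H$ is nonincreasing'': this is precisely where the absolute continuity of $H$ (rather than mere a.e.\ differentiability) is indispensable, and it is supplied by the Lebesgue-integral representation in Definition \ref{abs-cont}(i). Everything else is routine bookkeeping with $L^1$ functions and the finiteness of $\|G\|_{L^1}$.
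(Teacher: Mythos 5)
Your proof is correct. The paper does not prove this lemma itself but cites it from \cite[Lemma 5.1]{abbas-att-sv}, and your argument is precisely the standard one used there: pass to the auxiliary function $F(t)-\int_0^t G(s)\,ds$, use local absolute continuity (via the integral representation of Definition \ref{abs-cont}(i)) to conclude it is nonincreasing from the a.e.\ sign of its derivative, combine monotonicity with boundedness from below, and add back the convergent integral of $G$; your remark that absolute continuity, not mere a.e.\ differentiability, is what licenses the monotonicity step is exactly the right point of care.
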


\begin{lemma}\label{fejer-cont2}  If $1 \leq p < \infty$, $1 \leq r \leq \infty$, $F:[0,+\infty)\rightarrow[0,+\infty)$ is 
locally absolutely continuous, $F\in L^p([0,+\infty))$, $G:[0,+\infty)\rightarrow\R$, $G\in  L^r([0,+\infty))$ and 
for almost every $t \in [0,+\infty)$ $$\frac{d}{dt}F(t)\leq G(t),$$ then $\lim_{t\rightarrow +\infty} F(t)=0$. 
\end{lemma}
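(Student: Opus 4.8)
\medskip
\noindent\textbf{Proof proposal.} The plan is to argue by contradiction, playing the differential inequality $\dot F\le G$ (which limits how fast $F$ can decrease over a short time interval) against the integrability $F\in L^p$ (which forces $F$ to have arbitrarily small tails near $+\infty$).

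First I would introduce the oscillation modulus $\omega(h):=\sup_{t\ge 0}\int_t^{t+h}|G(s)|\,ds$ and show that $\omega(h)\to 0$ as $h\to 0^+$. For $r\in(1,+\infty]$ this follows at once from H\"older's inequality, which yields $\omega(h)\le \|G\|_{L^r([0,+\infty))}\,h^{1-1/r}$ (with the convention $1-1/\infty=1$); for $r=1$ it is a consequence of the absolute continuity of the Lebesgue integral, namely that $\int_E|G|$ is uniformly small whenever the Lebesgue measure of $E$ is small. Next, since $F$ is locally absolutely continuous, for every $0\le t\le s$ one has $F(s)-F(t)=\int_t^s\dot F(\tau)\,d\tau\le\int_t^s G(\tau)\,d\tau\le\int_t^s|G(\tau)|\,d\tau$, and hence $F(s)\ge F(t)-\omega(s-t)$.

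Now fix $\e>0$ and, using the first step, pick $h>0$ with $\omega(h)<\e/2$. If $F(t)\ge\e$ for some $t\ge 0$, then the estimate above gives $F(s)\ge\e-\omega(h)>\e/2$ for all $s\in[t,t+h]$, whence $\int_t^{t+h}F(s)^p\,ds\ge h\,(\e/2)^p$. On the other hand, $F\in L^p([0,+\infty))$ with $p<\infty$ implies $\int_T^{+\infty}F(s)^p\,ds\to 0$ as $T\to+\infty$, so we may choose $T_1\ge 0$ with $\int_{T_1}^{+\infty}F(s)^p\,ds< h\,(\e/2)^p$. Combining the two, it is impossible that $F(t)\ge\e$ for some $t\ge T_1$, for this would force $h\,(\e/2)^p\le\int_t^{t+h}F(s)^p\,ds\le\int_{T_1}^{+\infty}F(s)^p\,ds< h\,(\e/2)^p$. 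Therefore $0\le F(t)<\e$ for all $t\ge T_1$, and since $\e>0$ was arbitrary, $\lim_{t\to+\infty}F(t)=0$.

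I expect the only genuinely delicate point to be treating the whole range $1\le r\le+\infty$ uniformly: for $r>1$ the modulus $\omega(h)$ is controlled by a positive power of $h$ via H\"older, but for $r=1$ this breaks down and one must instead invoke the equi-integrability of $G$ to get $\omega(h)\to 0$. Everything else is bookkeeping: the passage from $\dot F\le G$ to the integral inequality is exactly licensed by the local absolute continuity of $F$ recalled in Definition~\ref{abs-cont} and Remark~\ref{rem-abs-cont}(a), and the vanishing of the tails of $F$ is the defining feature of membership in $L^p([0,+\infty))$ for finite $p$.
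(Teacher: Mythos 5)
Your overall strategy (modulus $\omega(h)=\sup_{t\ge 0}\int_t^{t+h}|G|$, H\"older for $r>1$, absolute continuity of the integral for $r=1$, then a contradiction with the vanishing $L^p$-tail of $F$) is the right one; note that the paper itself gives no proof but cites \cite[Lemma 5.2]{abbas-att-sv}, whose argument is of exactly this type. However, as written your key step is wrong: from $F(s)-F(t)=\int_t^s\dot F(\tau)\,d\tau\le\int_t^s|G(\tau)|\,d\tau$ for $s\ge t$ you may conclude $F(s)\le F(t)+\omega(s-t)$, equivalently $F(t)\ge F(s)-\omega(s-t)$, i.e.\ the differential inequality $\dot F\le G$ only prevents $F$ from \emph{increasing} too fast; it gives no lower bound on $F(s)$ for $s>t$, since nothing in the hypotheses bounds $\dot F$ from below. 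The inequality you actually invoke, $F(s)\ge F(t)-\omega(s-t)$ for $s\in[t,t+h]$, is false in general: take $G\equiv 0$, so $\omega\equiv 0$, and it would assert that $F$ is nondecreasing, whereas $\dot F\le 0$ allows $F$ to drop from $\e$ to $0$ immediately after $t$. Hence the claim ``$F(t)\ge\e$ implies $F\ge\e/2$ on $[t,t+h]$'' does not follow from your estimates.

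The repair is immediate and keeps everything else intact: run the window backwards in time. Choose $h$ with $\omega(h)<\e/2$ and $T_1$ with $\int_{T_1}^{+\infty}F^p<h(\e/2)^p$. If $F(t)\ge\e$ for some $t\ge T_1+h$, then for every $s\in[t-h,t]$ one has $F(t)-F(s)=\int_s^t\dot F(\tau)\,d\tau\le\int_s^t|G(\tau)|\,d\tau\le\omega(h)<\e/2$, hence $F(s)>\e/2$ on $[t-h,t]\subseteq[T_1,+\infty)$, and therefore $\int_{T_1}^{+\infty}F^p\ge\int_{t-h}^{t}F^p\ge h(\e/2)^p$, a contradiction. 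Thus $F(t)<\e$ for all $t\ge T_1+h$, and since $\e>0$ was arbitrary, $\lim_{t\to+\infty}F(t)=0$. With this one-line change (large value of $F$ at time $t$ forces $F$ to have been large on the \emph{preceding} interval of length $h$), your proof is correct; the treatment of $\omega(h)$ for $1\le r\le\infty$ and the use of local absolute continuity to pass from the a.e.\ inequality to the integral inequality are fine as you wrote them.
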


Further we recall a differentiability result involving the composition of convex functions with absolutely 
continuous trajectories which is due to Br\'{e}zis (\cite[Lemme 4, p. 73]{brezis}; see also \cite[Lemma 3.2]{att-cza-10}). 

\begin{lemma}\label{diff-brezis} Let $f:\R^n\rightarrow \R\cup\{+\infty\}$ be a proper, convex and lower semicontinuous function. 
Let $x\in L^2([0,T],\R^n)$ be absolutely continuous such that $\dot x\in L^2([0,T],\R^n)$ and $x(t)\in\dom f$ for almost every 
$t \in [0,T]$. Assume that there exists $\xi\in L^2([0,T],\R^n)$ such that $\xi(t)\in\partial f(x(t))$ for almost every $t \in [0,T]$. Then the function 
$t\mapsto f(x(t))$ is absolutely continuous and for every $t$ such that $x(t)\in\dom \partial f$ we have 
$$\frac{d}{dt}f(x(t))=\langle \dot x(t),h\rangle \ \forall h\in\partial f(x(t)).$$ 
\end{lemma}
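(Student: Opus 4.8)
The plan is to establish the two assertions separately, following Br\'{e}zis's classical argument: first that $\varphi:=f\circ x$ is absolutely continuous on $[0,T]$, and then the chain-rule identity. For the identity, absolute continuity is needed only in order to know that $\varphi$ is differentiable at almost every $t$; the value of the derivative then comes straight from the subgradient inequality. Indeed, if $t_0$ is a point at which both $\varphi$ and $x$ are differentiable and $x(t_0)\in\dom\partial f$, then for every $h\in\partial f(x(t_0))$ and every $\tau\in[0,T]$ one has $\varphi(\tau)-\varphi(t_0)\ge\langle h,x(\tau)-x(t_0)\rangle$; dividing by $\tau-t_0>0$ and letting $\tau\downarrow t_0$ yields $\varphi'(t_0)\ge\langle h,\dot x(t_0)\rangle$, while dividing by $\tau-t_0<0$ and letting $\tau\uparrow t_0$ reverses the inequality, so $\varphi'(t_0)=\langle h,\dot x(t_0)\rangle$. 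Since $\xi(t)\in\partial f(x(t))$, hence $x(t)\in\dom\partial f$, for a.e.\ $t$, this covers almost every $t$.

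For the absolute continuity I would regularize via the Moreau envelope. For $\lambda>0$ let $f_\lambda$ denote the Moreau envelope of $f$: it is convex, of class $C^1$, with $\lambda^{-1}$-Lipschitz gradient $\nabla f_\lambda(y)=\lambda^{-1}\bigl(y-\prox_{\lambda f}(y)\bigr)$, and it satisfies the standard facts $\nabla f_\lambda(y)\in\partial f(\prox_{\lambda f}(y))$, $\|\nabla f_\lambda(y)\|\le\|v\|$ for every $v\in\partial f(y)$, $f_\lambda(y)\uparrow f(y)$ as $\lambda\downarrow0$ for all $y\in\R^n$, and $\nabla f_\lambda(y)\to(\partial f)^0(y)$, the minimal-norm element of $\partial f(y)$, as $\lambda\downarrow0$ whenever $\partial f(y)\neq\emptyset$. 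Since $f_\lambda$ is Lipschitz on the compact set $x([0,T])$ and $x$ is absolutely continuous, $\varphi_\lambda:=f_\lambda\circ x$ is absolutely continuous with $\dot\varphi_\lambda(t)=\langle\nabla f_\lambda(x(t)),\dot x(t)\rangle$ for a.e.\ $t$, so that for all $s,t\in[0,T]$
\[
\varphi_\lambda(t)-\varphi_\lambda(s)=\int_s^t\langle\nabla f_\lambda(x(\tau)),\dot x(\tau)\rangle\,d\tau .
\]
The integrand is dominated, uniformly in $\lambda$, by $\|\xi(\tau)\|\,\|\dot x(\tau)\|$, which belongs to $L^1([0,T])$ by the Cauchy--Schwarz inequality because $\xi,\dot x\in L^2([0,T],\R^n)$, and it converges pointwise a.e., as $\lambda\downarrow0$, to $g(\tau):=\langle(\partial f)^0(x(\tau)),\dot x(\tau)\rangle\in L^1([0,T])$.

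Next I would pass to the limit $\lambda\downarrow0$. Fix $s_0$ with $x(s_0)\in\dom f$ (which holds for a.e.\ $s_0$) and put $\psi(t):=\varphi(s_0)+\int_{s_0}^t g(\tau)\,d\tau$, an absolutely continuous function. Evaluating the displayed identity at $s=s_0$ and letting $\lambda\downarrow0$, dominated convergence gives $\varphi(t)=\psi(t)$ for every $t$ with $x(t)\in\dom f$, that is, on a set of full measure; lower semicontinuity of $\varphi$ together with continuity of $\psi$ then forces $\varphi\le\psi$ everywhere, while $\varphi\ge\varphi_\lambda$ together with $\varphi_\lambda(s_0)\to\varphi(s_0)$ and the same dominated-convergence limit forces $\varphi\ge\psi$ everywhere. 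Hence $\varphi=\psi$ on $[0,T]$ (so, incidentally, $x(t)\in\dom f$ for every $t$), $\varphi$ is absolutely continuous, and $\varphi'=g$ a.e.; comparison with the first paragraph identifies $g(t)$ with $\langle\dot x(t),h\rangle$ for every $h\in\partial f(x(t))$, at a.e.\ $t$.

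I expect the main obstacle to be exactly this last step of bookkeeping, namely upgrading the a.e.\ equality $\varphi=\psi$ to genuine absolute continuity of $\varphi$ on all of $[0,T]$: this needs both the lower-semicontinuity argument for $\varphi\le\psi$ and the envelope lower bound $\varphi\ge\varphi_\lambda$ for the reverse inequality. The remaining ingredients are routine — the difference-quotient computation for the chain rule, the standard Moreau--Yosida properties of $f_\lambda$, and the uniform domination by $\|\xi\|\,\|\dot x\|\in L^1$ that legitimizes the limits, with the $L^2$ hypotheses on $x$, $\dot x$ and $\xi$ entering precisely to produce that integrable majorant.
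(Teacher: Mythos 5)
The paper does not contain a proof of this lemma: it is quoted verbatim from Br\'ezis \cite{brezis} (Lemme 4, p.~73; see also \cite{att-cza-10}), so there is no internal argument to compare against. Your proof is correct, and it is essentially the classical one: regularize by the Moreau--Yosida envelope $f_\lambda$, use the smooth chain rule for $f_\lambda\circ x$, dominate $\|\nabla f_\lambda(x(\tau))\|$ uniformly in $\lambda$ by $\|\xi(\tau)\|$ so that $\|\xi\|\,\|\dot x\|\in L^1$ serves as an integrable majorant, pass to the limit $\lambda\downarrow 0$ (monotone convergence of $f_\lambda\uparrow f$ plus dominated convergence of the integrals) to identify $f\circ x$ with an absolutely continuous primitive of $\tau\mapsto\langle(\partial f)^0(x(\tau)),\dot x(\tau)\rangle$, and obtain the derivative formula from the two-sided difference-quotient argument based on the subgradient inequality. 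The only caveat is one of formulation, not of substance: the phrase ``for every $t$ such that $x(t)\in\dom\partial f$'' in the statement tacitly presupposes that $x$ and $f\circ x$ are differentiable at $t$ (this is how Br\'ezis phrases it), and that is precisely the version your first paragraph establishes; your bookkeeping step upgrading the a.e.\ identity $\varphi=\psi$ to equality everywhere (via $\varphi\ge\varphi_\lambda$ and lower semicontinuity, or simply via $\varphi_\lambda(t)\uparrow\varphi(t)$ at every $t$) is sound.
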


We close this section by recalling the following characterization of the proximal point operator of a proper, convex and lower semincontinuous function
$f:\R^n\rightarrow\R\cup\{+\infty\}$. For every $\eta >0$ it holds (see for example \cite{bauschke-book})
\begin{equation}\label{ch-prox}p=\prox\nolimits_{\eta f}(x) \mbox{ if and only if }x\in p+\eta\partial f(p),\end{equation}
where $\partial f$ denotes the convex subdifferential of $f$.

\section{Asymptotic analysis}\label{sec3}

Before starting with the convergence analysis for the dynamical system \eqref{intr-dyn-syst}, we would like to point out that this can be written as

\begin{equation}\label{dyn-syst-M}\left\{
\begin{array}{ll}
\dot x(t) = (\prox\circ(\id-\eta\nabla g)-\id)\big(x(t)\big),\\
x(0)=x_0,
\end{array}\right.\end{equation}
where $\prox\circ(\id-\eta\nabla g)-\id $ is a $(2+\eta\beta)$-Lipschitz continuous operator. This follows from the fact that the proximal point operator of a proper, convex and lower semicontinuous function is 
nonexpansive, i.e., $1$-Lipschitz continuous (see for example \cite{bauschke-book}). According to the global version of the Cauchy-Lipschitz Theorem (see for instance \cite[Theorem 17.1.2(b)]{abm}), there exists
a unique global solution $x \in C^1([0,+\infty), \R^n)$  of the above dynamical system.

\subsection{Convergence of the trajectories}\label{subsec31}

\begin{lemma}\label{l-decr} Suppose that $f+g$ is bounded from below and $\eta>0$ fulfills the inequality
\begin{equation}\label{eta-beta}\eta\beta(3+\eta\beta)<1.\end{equation} For $x_0\in\R^n$, let  $x \in C^1([0,+\infty), \R^n)$ be the unique global solution of 
\eqref{intr-dyn-syst}. Then the following statements hold: 
\begin{enumerate}
                                            \item [(a)] $\dot x\in L^2([0,+\infty);\R^n)$ and $\lim_{t\rightarrow+\infty}\dot x(t)=0$;
                                            \item [(b)] $\exists\lim_{t\rightarrow+\infty}(f+g)\big(\dot x(t)+x(t)\big)\in\R$.
                                           \end{enumerate}
\end{lemma}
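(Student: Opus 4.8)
The plan is to exhibit a Lyapunov-type functional along the trajectory whose time derivative is dominated by a negative multiple of $\|\dot x(t)\|^2$, and then to read off both statements from it. Throughout, write $y(t):=\dot x(t)+x(t)$, so that \eqref{intr-dyn-syst} reads $y(t)=\prox_{\eta f}(x(t)-\eta\nabla g(x(t)))$ and in particular $y(t)\in\dom f$ for every $t$. Since the operator $\prox\circ(\id-\eta\nabla g)-\id$ appearing in \eqref{dyn-syst-M} is $(2+\eta\beta)$-Lipschitz and $x\in C^1$, Remark \ref{rem-abs-cont}(b) shows that $\dot x$ is locally absolutely continuous with $\|\ddot x(t)\|\le(2+\eta\beta)\|\dot x(t)\|$ for almost every $t$; hence $y=\dot x+x$ is locally absolutely continuous as well. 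From the characterization \eqref{ch-prox}, $x(t)-\eta\nabla g(x(t))\in y(t)+\eta\partial f(y(t))$, which rewrites as $\xi(t):=-\tfrac1\eta\dot x(t)-\nabla g(x(t))\in\partial f(y(t))$.

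Next I would differentiate $t\mapsto(f+g)(y(t))$. On each interval $[0,T]$ the map $\xi$ is continuous, hence in $L^2$, and $\dot y=\ddot x+\dot x\in L^2$, so Lemma \ref{diff-brezis} applies to the convex function $f$ and the curve $y$; combining it with the classical chain rule for the $C^1$ function $g$ gives, for almost every $t$,
$$\tfrac{d}{dt}(f+g)(y(t))=\langle\dot y(t),\xi(t)+\nabla g(y(t))\rangle=\big\langle\dot x(t)+\ddot x(t),\,-\tfrac1\eta\dot x(t)+\big(\nabla g(y(t))-\nabla g(x(t))\big)\big\rangle.$$
Expanding this, using $\langle\ddot x(t),\dot x(t)\rangle=\tfrac{1}{2}\tfrac{d}{dt}\|\dot x(t)\|^2$, bounding the two cross terms $\langle\dot x(t),\nabla g(y(t))-\nabla g(x(t))\rangle$ and $\langle\ddot x(t),\nabla g(y(t))-\nabla g(x(t))\rangle$ via $\|\nabla g(y(t))-\nabla g(x(t))\|\le\beta\|y(t)-x(t)\|=\beta\|\dot x(t)\|$ together with $\|\ddot x(t)\|\le(2+\eta\beta)\|\dot x(t)\|$ in the latter, one arrives at
$$\tfrac{d}{dt}\Big[(f+g)(y(t))+\tfrac{1}{2\eta}\|\dot x(t)\|^2\Big]\le-\tfrac1\eta\big(1-\eta\beta(3+\eta\beta)\big)\|\dot x(t)\|^2=:-M\|\dot x(t)\|^2,$$
where $M>0$ precisely thanks to the standing hypothesis \eqref{eta-beta}.

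From here the conclusions follow quickly. Set $H(t):=(f+g)(y(t))+\tfrac{1}{2\eta}\|\dot x(t)\|^2$, which is locally absolutely continuous and bounded from below (because $f+g$ is). Integrating the last inequality over $[0,T]$ and letting $T\to+\infty$ gives $M\int_0^{+\infty}\|\dot x(t)\|^2\,dt\le H(0)-\inf_{\R^n}(f+g)<+\infty$, i.e.\ $\dot x\in L^2([0,+\infty);\R^n)$; moreover $\tfrac{d}{dt}H(t)\le0$ almost everywhere together with the lower bound yields, by Lemma \ref{fejer-cont1} (with $G\equiv0$), that $\lim_{t\to+\infty}H(t)$ exists in $\R$. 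For the decay of $\dot x$, note $\|\dot x\|^2\in L^1$ and $\tfrac{d}{dt}\|\dot x(t)\|^2=2\langle\dot x(t),\ddot x(t)\rangle\le2(2+\eta\beta)\|\dot x(t)\|^2\in L^1$, so Lemma \ref{fejer-cont2} (with $p=r=1$) gives $\|\dot x(t)\|^2\to0$, which completes (a). Finally (b) is immediate, since $(f+g)(y(t))=H(t)-\tfrac{1}{2\eta}\|\dot x(t)\|^2\to\lim_{t\to+\infty}H(t)\in\R$.

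I expect the main obstacle to be the second-order term. The chain rule for $t\mapsto(f+g)(y(t))$ unavoidably produces a contribution involving $\ddot x(t)$, and two things make the argument close: the reformulation \eqref{dyn-syst-M} is what provides the a priori bound $\|\ddot x(t)\|\le(2+\eta\beta)\|\dot x(t)\|$ (without which $\ddot x$ would be uncontrolled), and the term $-\tfrac1\eta\langle\ddot x(t),\dot x(t)\rangle$ must be recognized as $-\tfrac{1}{2\eta}\tfrac{d}{dt}\|\dot x(t)\|^2$ and transferred to the left-hand side, which dictates the precise shape of $H$. The exact constant $3+\eta\beta$ in \eqref{eta-beta} is then exactly what renders the residual coefficient negative. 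A secondary point needing care is that Lemma \ref{diff-brezis} must be invoked for the curve $y=\dot x+x$, whose values lie in $\dom f$, and not for $x$ itself, which in general need not.
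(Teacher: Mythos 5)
Your proposal is correct and follows essentially the same route as the paper: the same Lyapunov functional $(f+g)(\dot x+x)+\tfrac{1}{2\eta}\|\dot x\|^2$, the same use of the reformulation \eqref{dyn-syst-M} to bound $\|\ddot x\|\le(2+\eta\beta)\|\dot x\|$, Lemma \ref{diff-brezis} applied to $y=\dot x+x$, and Lemmas \ref{fejer-cont1} and \ref{fejer-cont2} to conclude, with the same constant $\tfrac1\eta-\beta(3+\eta\beta)$. The only cosmetic difference is how you bound $\tfrac{d}{dt}\|\dot x\|^2$ before invoking Lemma \ref{fejer-cont2}, which is equivalent to the paper's estimate.
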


\begin{proof} Let us start by noticing that in the light of the the reformulation in \eqref{dyn-syst-M} and of Remark \ref{rem-abs-cont}(b), $\dot x$ is locally absolutely 
continuous, hence $\ddot x$ exists and for almost every $t \in [0,+\infty)$ one has 
\begin{equation}\label{ddot-exist}\|\ddot x(t)\|\leq(2+\eta\beta)\|\dot x(t)\|.\end{equation}

We fix an arbitrary $T>0$. Due to the continuity properties of the trajectory on $[0,T]$, \eqref{ddot-exist} and the Lipschitz continuity of $\nabla g$, one has
$$x, \dot x, \ddot x, \nabla g(x) \in L^2([0,T];\R^n).$$
Further, from the characterization  \eqref{ch-prox} of the proximal point operator we have 
\begin{equation}\label{from-def-prox}-\frac{1}{\eta}\dot x(t)-\nabla g(x(t))\in\partial f(\dot x(t)+x(t)) \ \forall t \in [0,+\infty).\end{equation}
Applying Lemma \ref{diff-brezis} we obtain that the function $t\mapsto f\big(\dot x(t)+x(t)\big)$ is absolutely continuous 
and $$\frac{d}{dt}f\big(\dot x(t)+x(t)\big)=\left\langle -\frac{1}{\eta}\dot x(t)-\nabla g(x(t)),\ddot x(t)+\dot x(t)\right\rangle$$
for almost every $t \in [0,T]$. Moreover, it holds 
$$\frac{d}{dt}g\big(\dot x(t)+x(t)\big)=\left\langle \nabla g\big(\dot x(t)+x(t)\big),\ddot x(t)+\dot x(t)\right\rangle$$
for almost every $t \in [0,T]$. Summing up the last two equalities, we obtain
\begin{align}\frac{d}{dt}(f+g)\big(\dot x(t)+x(t)\big) = & \left\langle -\frac{1}{\eta}\dot x(t)-\nabla g(x(t))+\nabla g\big(\dot x(t)+x(t)\big), 
\ddot x(t)+\dot x(t)\right\rangle\nonumber\\
 = & -\frac{1}{2\eta}\frac{d}{dt}\big(\|\dot x(t)\|^2\big)-\frac{1}{\eta}\|\dot x(t)\|^2\nonumber\\
 & +\left\langle \nabla g\big(\dot x(t)+x(t)\big)-\nabla g(x(t)), 
\ddot x(t)+\dot x(t)\right\rangle\nonumber\\
\label{lip-g}\leq & -\frac{1}{2\eta}\frac{d}{dt}\big(\|\dot x(t)\|^2\big)-\frac{1}{\eta}\|\dot x(t)\|^2
 + \beta\|\dot x(t)\|\cdot\|\ddot x(t)+\dot x(t)\|\\
 \label{ddot}\leq & -\frac{1}{2\eta}\frac{d}{dt}\big(\|\dot x(t)\|^2\big)-\frac{1}{\eta}\|\dot x(t)\|^2
 + \beta(3+\eta\beta)\|\dot x(t)\|^2\\
 = & -\frac{1}{2\eta}\frac{d}{dt}\big(\|\dot x(t)\|^2\big)-\left[\frac{1}{\eta}-\beta(3+\eta\beta)\right]\|\dot x(t)\|^2\nonumber
\end{align}
for almost every $t \in [0,T]$, where in \eqref{lip-g} we used the Lipschitz continuity of $\nabla g$ and in \eqref{ddot} the inequality \eqref{ddot-exist}. Altogether, we conclude that for almost every $t\in [0,T]$ we have 
\begin{equation}\label{decr-f} \frac{d}{dt}\left[(f+g)\big(\dot x(t)+x(t)\big)+\frac{1}{2\eta}\|\dot x(t)\|^2\right]+
\left[\frac{1}{\eta}-\beta(3+\eta\beta)\right]\|\dot x(t)\|^2\leq 0
\end{equation}
and by integration we get
\begin{align}\label{integ}
& (f+g)\big(\dot x(T)+x(T)\big)+\frac{1}{2\eta}\|\dot x(T)\|^2  + \left[\frac{1}{\eta}-\beta(3+\eta\beta)\right] \int_{0}^T \|\dot x(t)\|^2dt \leq \nonumber \\
& (f+g)\big(\dot x(0)+x(0)\big)+\frac{1}{2\eta}\|\dot x(0)\|^2. 
\end{align}
By using  \eqref{eta-beta} and the fact that $f+g$ is bounded from below and by taking into account that $T > 0$ has been arbitrarily chosen, we obtain
\begin{equation}\label{dot-l2}\dot x\in L^2([0,+\infty);\R^n).      \end{equation}
Due to \eqref{ddot-exist}, this further implies
\begin{equation}\label{ddot-l2}
\ddot x\in L^2([0,+\infty);\R^n).      
\end{equation}
Furthermore, for almost every $t \in [0,+\infty)$ we have 
$$\frac{d}{dt}\big(\|\dot x(t)\|^2\big)=2\langle \dot x(t),\ddot x(t)\rangle\leq \|\dot x(t)\|^2+\|\ddot x(t)\|^2.$$
By applying Lemma \ref{fejer-cont2}, it follows that $\lim_{t\rightarrow+\infty}\dot x(t)=0$ and the proof of (a) is complete. 
From \eqref{decr-f}, \eqref{eta-beta} and by using that $T >0$ has been arbitrarily chosen, we get 
$$\frac{d}{dt}\left[(f+g)\big(\dot x(t)+x(t)\big)+\frac{1}{2\eta}\|\dot x(t)\|^2\right] \leq 0$$
for almost every $t \in [0,+\infty)$. From Lemma \ref{fejer-cont1} it follows that
$$\lim_{t\rightarrow+\infty}\left[(f+g)\big(\dot x(t)+x(t)\big)+\frac{1}{2\eta}\|\dot x(t)\|^2\right]$$
exists and it is a real number, hence from $\lim_{t\rightarrow+\infty}\dot x(t)=0$ the conclusion follows. 
\end{proof}

We defined the limit set of $x$  as 
$$\omega (x)=\{\ol x\in\R^n:\exists t_k\rightarrow+\infty \mbox{ such that }x(t_k)\rightarrow\ol x \mbox{ as }k\rightarrow+\infty\}.$$

\begin{lemma}\label{l-lim-crit-f} Suppose that $f+g$ is bounded from below and $\eta>0$ fulfills the inequality \eqref{eta-beta}. 
For $x_0\in\R^n$, let  $x \in C^1([0,+\infty), \R^n)$ be the unique global solution of 
\eqref{intr-dyn-syst}. Then $$\omega(x)\subseteq \crit (f+g).$$ 
\end{lemma}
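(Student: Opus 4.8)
The plan is to show that every $\ol x\in\omega(x)$ belongs to $\crit(f+g)$ by invoking the closedness criterion for the limiting subdifferential recalled in Section \ref{sec2}. Concretely, fixing $\ol x\in\omega(x)$ and a sequence $t_k\to+\infty$ with $x(t_k)\to\ol x$, I need to exhibit points $u_k\to\ol x$, subgradients $v_k\in\partial(f+g)(u_k)$ with $v_k\to0$, and the value convergence $(f+g)(u_k)\to(f+g)(\ol x)$.

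First I would build the subgradients. From the characterization \eqref{from-def-prox} of the proximal step we have $-\frac1\eta\dot x(t)-\nabla g(x(t))\in\partial f(\dot x(t)+x(t))$ for all $t\ge0$; adding $\nabla g(\dot x(t)+x(t))$ to both sides and using the subdifferential sum rule $\partial(f+g)=\partial f+\nabla g$ gives
$$v(t):=-\frac1\eta\dot x(t)-\nabla g(x(t))+\nabla g\big(\dot x(t)+x(t)\big)\in\partial(f+g)\big(\dot x(t)+x(t)\big)\qquad\forall\,t\ge0.$$
Set $u_k:=\dot x(t_k)+x(t_k)$ and $v_k:=v(t_k)$. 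By Lemma \ref{l-decr}(a) we have $\dot x(t_k)\to0$, hence $u_k\to\ol x$, and then the continuity of $\nabla g$ forces $v_k\to0$.

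The main obstacle is recovering the convergence of the objective values, since lower semicontinuity of $f$ only yields $\liminf_k f(u_k)\ge f(\ol x)$. To obtain the reverse inequality I would exploit that $u_k$ is not merely a point satisfying a first-order condition but the actual \emph{minimizer} defining $\prox\nolimits_{\eta f}\big(x(t_k)-\eta\nabla g(x(t_k))\big)$; comparing the value of the proximal objective at $u_k$ with its value at the competitor $\ol x$ gives
$$f(u_k)+\frac1{2\eta}\big\|\dot x(t_k)+\eta\nabla g(x(t_k))\big\|^2\le f(\ol x)+\frac1{2\eta}\big\|\ol x-x(t_k)+\eta\nabla g(x(t_k))\big\|^2.$$
Letting $k\to+\infty$ and using $\dot x(t_k)\to0$, $x(t_k)\to\ol x$ and the continuity of $\nabla g$, both quadratic terms converge to $\frac{\eta}{2}\|\nabla g(\ol x)\|^2$ and cancel, so $\limsup_k f(u_k)\le f(\ol x)$. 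Hence $f(u_k)\to f(\ol x)$, and since $g$ is continuous, $(f+g)(u_k)\to(f+g)(\ol x)$.

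Finally, with $v_k\in\partial(f+g)(u_k)$, $(u_k,v_k)\to(\ol x,0)$ and $(f+g)(u_k)\to(f+g)(\ol x)$, the closedness of $\gr\partial(f+g)$ yields $0\in\partial(f+g)(\ol x)$, that is, $\ol x\in\crit(f+g)$, which establishes $\omega(x)\subseteq\crit(f+g)$. Note that Lemma \ref{l-decr}(b) is not needed for this statement; it will instead serve later to show that $f+g$ is constant on $\omega(x)$.
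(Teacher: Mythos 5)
Your proposal is correct and follows essentially the same route as the paper: the subgradient $-\frac1\eta\dot x(t_k)-\nabla g(x(t_k))+\nabla g(\dot x(t_k)+x(t_k))\in\partial(f+g)(\dot x(t_k)+x(t_k))$ via \eqref{from-def-prox} and the sum rule, the value convergence obtained by comparing the proximal objective at $\dot x(t_k)+x(t_k)$ with the competitor $\ol x$ (the paper merely expands the square and cancels the common gradient term, which is cosmetically different from your formulation), and the closedness of the graph of the limiting subdifferential to conclude $0\in\partial(f+g)(\ol x)$. No gaps.
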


\begin{proof} Let $\ol x\in\omega (x)$ and $t_k\rightarrow+\infty \mbox{ be such that }x(t_k)\rightarrow\ol x 
\mbox{ as }k\rightarrow+\infty.$ From \eqref{from-def-prox} we have 
\begin{align}-\frac{1}{\eta}\dot x(t_k)-\nabla g(x(t_k))+\nabla g\big(\dot x(t_k)+x(t_k)\big)\in & \ \partial f\big(\dot x(t_k)+x(t_k)\big)+
\nabla g\big(\dot x(t_k)+x(t_k)\big)\nonumber\\\label{incl-tk} = & \ \partial (f+g)\big(\dot x(t_k)+x(t_k)\big) \ \forall k \in \N.
\end{align}

Lemma \ref{l-decr}(a) and the Lipschitz continuity of $\nabla g$ ensure that 
\begin{equation}\label{bor1} -\frac{1}{\eta}\dot x(t_k)-\nabla g(x(t_k))+\nabla g\big(\dot x(t_k)+x(t_k)\big)\rightarrow 0 \mbox{ as }k\rightarrow+\infty 
\end{equation}
and 
\begin{equation}\label{bor2} \dot x(t_k)+x(t_k)\rightarrow \ol x \mbox{ as }k\rightarrow+\infty. 
\end{equation}

We claim that \begin{equation}\label{bor3} \lim_{k\rightarrow+\infty}(f+g)\big(\dot x(t_k)+x(t_k)\big)=(f+g)(\ol x).\end{equation}
Due to the lower semicontinuity of $f$ it holds
\begin{equation}\label{from-f-lsc}\liminf_{k\rightarrow+\infty}f\big(\dot x(t_k)+x(t_k)\big)\geq f(\ol x).\end{equation}

Further, since \begin{align*} \dot x(t_k)+x(t_k)= & \argmin_{u\in\R^n}\left[f(u)+\frac{1}{2\eta}\left\|u-\big(x(t_k)-\eta\nabla g(x(t_k))\big)\right\|^2\right]\\
                                      = & \argmin_{u\in\R^n}\left[f(u)+\frac{1}{2\eta}\|u-x(t_k)\|^2+\langle u-x(t_k),\nabla g(x(t_k))\rangle\right] \end{align*}
we have the inequality 
\begin{align*}
 & \ f\big(\dot x(t_k)+x(t_k)\big)+\frac{1}{2\eta}\|\dot x(t_k)\|^2+\langle \dot x(t_k),\nabla g(x(t_k))\rangle\\
\leq & \ f(\ol x)+\frac{1}{2\eta}\|\ol x-x(t_k)\|^2+\langle \ol x-x(t_k), \nabla g(x(t_k))\rangle \ \forall k \in \N.
\end{align*}

Taking the limit as $k\rightarrow+\infty$ we derive by using again Lemma \ref{l-decr}(a) that 
\begin{equation*}\limsup_{k\rightarrow+\infty}f\big(\dot x(t_k)+x(t_k)\big)\leq f(\ol x),\end{equation*}
which combined with \eqref{from-f-lsc} implies 
\begin{equation*}\lim_{k\rightarrow+\infty}f\big(\dot x(t_k)+x(t_k)\big)= f(\ol x).\end{equation*}
By using \eqref{bor2}  and the continuity of $g$ we conclude that \eqref{bor3} is true. 

Altogether, from \eqref{incl-tk}, \eqref{bor1}, \eqref{bor2}, \eqref{bor3} and the closedness criteria of the limiting subdifferential we 
obtain $0\in\partial (f+g)(\ol x)$ and the proof is complete.  
\end{proof}

\begin{lemma}\label{l-h123} Suppose that $f+g$ is bounded from below and $\eta>0$ fulfills the inequality \eqref{eta-beta}. 
For $x_0\in\R^n$, let  $x \in C^1([0,+\infty), \R^n)$ be the unique global solution of 
\eqref{intr-dyn-syst} and consider the function
$$H:\R^n\times\R^n\to\R\cup\{+\infty\},\, H(u,v)=(f+g)(u)+\frac{1}{2\eta}\|u-v\|^2.$$
Then the following statements are true:
\begin{itemize}
\item[($H_1$)] for almost every $t\in [0,+\infty)$ it holds 
$$\frac{d}{dt}H\big(\dot x(t)+x(t),x(t)\big)\leq -\left[\frac{1}{\eta}-(3+\eta\beta)\beta\right]\|\dot x(t)\|^2\leq 0$$  and 
$$\exists\lim_{t\rightarrow +\infty}H\big(\dot x(t)+x(t),x(t)\big)\in\R;$$
\item[($H_2$)] for almost every $t\in [0,+\infty)$ it holds  $$z(t):=\left(-\nabla g(x(t))+\nabla g\big(\dot x(t)+x(t)\big),-\frac{1}{\eta}\dot x(t)\right)\in\partial H\big(\dot x(t)+x(t),x(t)\big)$$ 
and $$\|z(t)\|\leq \left(\beta+\frac{1}{\eta}\right)\|\dot x(t)\|;$$ 
\item[($H_3$)] for $\ol x\in\omega (x)$ and $t_k\rightarrow+\infty$ such that $x(t_k)\rightarrow\ol x$ as $k\rightarrow+\infty$, we have  
$H\big(\dot x(t_k)+x(t_k),x(t_k)\big)\rightarrow H(\ol x,\ol x)$ as $k\rightarrow+\infty$.
\end{itemize}
\end{lemma}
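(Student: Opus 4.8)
The plan is to verify the three properties $(H_1)$, $(H_2)$, $(H_3)$ in turn, essentially by translating the work already done in Lemma \ref{l-decr} and Lemma \ref{l-lim-crit-f} into the language of the auxiliary function $H$. The key observation is that $H\big(\dot x(t)+x(t),x(t)\big) = (f+g)\big(\dot x(t)+x(t)\big) + \frac{1}{2\eta}\|\dot x(t)\|^2$, so that the quantity whose derivative we controlled in \eqref{decr-f} is precisely $t\mapsto H\big(\dot x(t)+x(t),x(t)\big)$.

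For $(H_1)$: I would simply recall inequality \eqref{decr-f} from the proof of Lemma \ref{l-decr}, which states exactly that $\frac{d}{dt}\big[(f+g)(\dot x(t)+x(t)) + \frac{1}{2\eta}\|\dot x(t)\|^2\big] + \big[\frac{1}{\eta}-\beta(3+\eta\beta)\big]\|\dot x(t)\|^2 \le 0$ for almost every $t$; since \eqref{eta-beta} guarantees the bracketed coefficient is positive, this gives both the claimed differential inequality and the fact that $t\mapsto H(\dot x(t)+x(t),x(t))$ is nonincreasing. The existence of the limit in $\R$ is then the second conclusion of Lemma \ref{l-decr}(b) together with $\lim_{t\to+\infty}\dot x(t)=0$ from Lemma \ref{l-decr}(a), or alternatively a direct appeal to Lemma \ref{fejer-cont1} applied to $F(t)=H(\dot x(t)+x(t),x(t))$, which is locally absolutely continuous (by Lemma \ref{diff-brezis} applied to the $f$-part plus smoothness of the rest) and bounded below since $f+g$ is bounded below and the squared-norm term is nonnegative.

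For $(H_2)$: I would compute $\partial H(u,v)$ via the subdifferential sum rule for the limiting subdifferential — the term $(f+g)(u)$ contributes $\partial(f+g)(u)$ in the $u$-slot, and the smooth term $\frac{1}{2\eta}\|u-v\|^2$ contributes its gradient $\big(\frac{1}{\eta}(u-v),-\frac{1}{\eta}(u-v)\big)$, so that $\partial H(u,v) = \big(\partial(f+g)(u)+\frac{1}{\eta}(u-v)\big)\times\{-\frac{1}{\eta}(u-v)\}$. Evaluating at $u=\dot x(t)+x(t)$, $v=x(t)$, so $u-v=\dot x(t)$, and substituting the inclusion \eqref{incl-tk} (equivalently \eqref{from-def-prox} plus the sum rule) which gives $-\frac{1}{\eta}\dot x(t)-\nabla g(x(t))+\nabla g(\dot x(t)+x(t)) \in \partial(f+g)(\dot x(t)+x(t))$, the first component of $z(t)$ becomes $-\frac{1}{\eta}\dot x(t)-\nabla g(x(t))+\nabla g(\dot x(t)+x(t)) + \frac{1}{\eta}\dot x(t) = -\nabla g(x(t))+\nabla g(\dot x(t)+x(t))$, as claimed; the second component is $-\frac{1}{\eta}\dot x(t)$. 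The norm bound follows from the triangle inequality, the Lipschitz continuity of $\nabla g$ applied to $\|\nabla g(\dot x(t)+x(t))-\nabla g(x(t))\|\le\beta\|\dot x(t)\|$, and the elementary estimate $\sqrt{a^2+b^2}\le a+b$ for $a,b\ge 0$ — giving $\|z(t)\|\le \beta\|\dot x(t)\| + \frac{1}{\eta}\|\dot x(t)\| = (\beta+\frac{1}{\eta})\|\dot x(t)\|$.

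For $(H_3)$: I would note $H\big(\dot x(t_k)+x(t_k),x(t_k)\big) = (f+g)\big(\dot x(t_k)+x(t_k)\big) + \frac{1}{2\eta}\|\dot x(t_k)\|^2$; the second term goes to $0$ by Lemma \ref{l-decr}(a), and the first term converges to $(f+g)(\ol x)$ by \eqref{bor3} established in the proof of Lemma \ref{l-lim-crit-f}. Since $H(\ol x,\ol x) = (f+g)(\ol x) + 0 = (f+g)(\ol x)$, this is exactly the claim. I do not anticipate a serious obstacle here: the only mildly delicate point is making sure the limiting-subdifferential sum rule applies to split $\partial H$ the way I want, but this is covered by the sum rule stated in the Preliminaries (the squared-norm term being continuously differentiable), so the whole lemma is essentially bookkeeping that repackages Lemmas \ref{l-decr} and \ref{l-lim-crit-f}.
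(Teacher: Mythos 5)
Your proposal is correct and follows essentially the same route as the paper: $(H_1)$ is read off from inequality \eqref{decr-f} and Lemma \ref{l-decr}, $(H_2)$ from \eqref{from-def-prox} together with the subdifferential formula $\partial H(u,v)=\big(\partial (f+g)(u)+\eta^{-1}(u-v)\big)\times \{\eta^{-1}(v-u)\}$ (which you derive via the sum rule, while the paper simply states it as \eqref{H-subdiff}) plus the Lipschitz continuity of $\nabla g$, and $(H_3)$ from the intermediate limit \eqref{bor3} in the proof of Lemma \ref{l-lim-crit-f} combined with $\dot x(t_k)\rightarrow 0$. The extra details you supply (the explicit norm estimate and the alternative appeal to Lemma \ref{fejer-cont1}) are fine and consistent with the paper's argument.
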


\begin{proof} (H1) follows from Lemma \ref{l-decr}. The first statement in (H2) is a consequence of \eqref{from-def-prox} and the relation
\begin{equation}\label{H-subdiff}\partial H(u,v)=\big(\partial (f+g)(u)+\eta^{-1}(u-v)\big)\times \{\eta^{-1}(v-u)\} \ \forall (u,v)\in\R^n\times\R^n,\end{equation}
while the second one is a consequence of the Lipschitz continuity of $\nabla g$. Finally, (H3) has been shown as intermediate step in the proof of Lemma \ref{l-lim-crit-f}. 
\end{proof}

\begin{lemma}\label{l} Suppose that $f+g$ is bounded from below and $\eta>0$ fulfills the inequality \eqref{eta-beta}. 
For $x_0\in\R^n$, let  $x \in C^1([0,+\infty), \R^n)$ be the unique global solution of 
\eqref{intr-dyn-syst} and consider the function
$$H:\R^n\times\R^n\to\R\cup\{+\infty\},\, H(u,v)=(f+g)(u)+\frac{1}{2\eta}\|u-v\|^2.$$
Suppose that  $x$ is bounded. Then the following statements are true:
\begin{itemize}
\item[(a)] $\omega(\dot x+x,x)\subseteq \crit(H)=\{(u,u)\in\R^n\times\R^n:u\in \crit(f+g)\}$; 
\item[(b)] $\lim_{t\to+\infty}\dist\Big(\big(\dot x(t)+x(t),x(t)\big),\omega\big(\dot x + x,x\big)\Big)=0$;
\item[(c)] $\omega\big(\dot x+x,x\big)$ is nonempty, compact and connected;
\item[(d)] $H$ is finite and constant on $\omega\big(\dot x+x,x\big).$
\end{itemize}
\end{lemma}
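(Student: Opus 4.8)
The plan is to derive all four assertions from Lemmas~\ref{l-decr}--\ref{l-h123} together with elementary topological facts about $\omega$-limit sets of bounded continuous curves. Throughout, abbreviate $\gamma(t):=\big(\dot x(t)+x(t),x(t)\big)$. Since $x$ is assumed bounded and $\dot x(t)\to 0$ by Lemma~\ref{l-decr}(a), the map $\gamma:[0,+\infty)\to\R^n\times\R^n$ is continuous (in fact $C^1$) and bounded, and $\omega\big(\dot x+x,x\big)=\omega(\gamma)$ is precisely the set of cluster points of $\gamma$.

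For (a), take $(\ol u,\ol v)\in\omega(\gamma)$ and $t_k\to+\infty$ with $\gamma(t_k)\to(\ol u,\ol v)$. Because $\dot x(t_k)\to 0$, the second component gives $x(t_k)\to\ol v$ and the first gives $\dot x(t_k)+x(t_k)\to\ol v$; hence $\ol u=\ol v$ and $\ol v\in\omega(x)$, so $\ol v\in\crit(f+g)$ by Lemma~\ref{l-lim-crit-f}. Conversely, from the subdifferential formula \eqref{H-subdiff}, $0\in\partial H(u,v)$ forces $\eta^{-1}(v-u)=0$, i.e.\ $u=v$, and then $0\in\partial(f+g)(u)$; this proves $\crit(H)=\{(u,u):u\in\crit(f+g)\}$ and, combined with the previous computation, the inclusion $\omega(\gamma)\subseteq\crit(H)$.

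Statements (b) and (c) are purely topological. For (b), argue by contradiction: if $\dist(\gamma(t),\omega(\gamma))\not\to 0$, choose $\e>0$ and $t_k\to+\infty$ with $\dist(\gamma(t_k),\omega(\gamma))\geq\e$; boundedness of $\gamma$ yields a subsequence of $(\gamma(t_k))$ converging to some point, which by definition belongs to $\omega(\gamma)$, a contradiction. For (c), nonemptiness and boundedness of $\omega(\gamma)$ follow from boundedness of $\gamma$, and closedness is clear from the representation $\omega(\gamma)=\bigcap_{n\in\N}\ol{\gamma([n,+\infty))}$ (an intersection of closed sets), so $\omega(\gamma)$ is compact. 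Connectedness follows from the same representation, since it exhibits $\omega(\gamma)$ as a nested intersection of nonempty compact connected sets — each $\ol{\gamma([n,+\infty))}$ is the closure of a continuous image of an interval, hence connected — and a decreasing intersection of nonempty compact connected sets is connected. Finally, for (d), by ($H_1$) the limit $\ell:=\lim_{t\to+\infty}H(\gamma(t))$ exists in $\R$; given any point of $\omega(\gamma)$, by (a) it equals some $(\ol x,\ol x)$ with $\ol x\in\omega(x)$, and picking $t_k\to+\infty$ with $x(t_k)\to\ol x$, property ($H_3$) gives $H(\gamma(t_k))\to H(\ol x,\ol x)$, whence $H(\ol x,\ol x)=\ell$ for every such point. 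Thus $H$ is finite and identically equal to $\ell$ on $\omega(\gamma)$.

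I do not expect a serious obstacle here: the argument is a routine adaptation of the limit-set analysis underlying the discrete KL framework. The only points needing a little care are the identification in (a) that every cluster point of the augmented trajectory is ``diagonal'' (which is where $\dot x(t)\to0$ enters) and the connectedness claim in (c), which relies on the standard topological lemma that a decreasing intersection of nonempty compact connected sets is connected together with the continuity of $\gamma$; neither requires any new estimate beyond Lemmas~\ref{l-decr}--\ref{l-h123}.
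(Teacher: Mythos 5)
Your proposal is correct and follows essentially the same route as the paper: parts (a), (b) and (d) are obtained exactly as intended, as direct consequences of Lemma \ref{l-decr}, Lemma \ref{l-lim-crit-f} and Lemma \ref{l-h123} (including the diagonal structure of the limit points coming from $\dot x(t)\to 0$ and the identification of $\crit(H)$ via \eqref{H-subdiff}). The only difference is in (c), where the paper simply cites \cite{haraux} and the proof of Theorem 4.1 in \cite{alv-att-bolte-red}, while you give the standard self-contained argument writing $\omega\big(\dot x+x,x\big)$ as a decreasing intersection of nonempty compact connected sets; this is a valid and slightly more elementary substitute for the citation, and no gaps remain.
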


\begin{proof} (a), (b) and (d) are direct consequences Lemma \ref{l-decr}, Lemma \ref{l-lim-crit-f} and Lemma \ref{l-h123}.

Finally, (c) is a classical result from \cite{haraux}. We also refer the reader  to the proof of Theorem 4.1 in \cite{alv-att-bolte-red}, where it is shown that the properties of $\omega(x)$ of being nonempty, compact and connected are generic for bounded trajectories fulfilling  $\lim_{t\rightarrow+\infty}{\dot x(t)}=0$). 
\end{proof}

\begin{remark}\label{cond-x-bound} 
Suppose that $\eta>0$ fulfills the inequality \eqref{eta-beta} and $f+g$ is cocoercive, that is $$\lim_{\|u\|\rightarrow+\infty}(f+g)(u)=+\infty.$$ 
For $x_0\in\R^n$, let  $x \in C^1([0,+\infty), \R^n)$ be the unique global solution of 
\eqref{intr-dyn-syst}. Then $f+g$ is bounded from below and  
$x$ is bounded.  

Indeed, since $f+g$ is a proper, lower semicontinuous and coercive function, it follows that 
$\inf_{u\in\R^n}[f(u)+g(u)]$ is finite and the infimum is attained. Hence $f+g$ is bounded from below. On the other hand, from \eqref{integ} it follows
\begin{align*}(f+g)\big(\dot x(T)+x(T)\big)\leq & \ (f+g)\big(\dot x(T)+x(T)\big)+\frac{1}{2\eta}\|\dot x(T)\|^2\\
  \leq & \ (f+g)\big(\dot x(0)+x_0)\big)+\frac{1}{2\eta}\|\dot x(0)\|^2 \ \forall T \geq 0.
\end{align*}
Since the lower level sets of $f+g$ are bounded, the above inequality yields the boundedness of $\dot x+x$, which 
combined with $\lim_{t\rightarrow+\infty}\dot x(t)=0$ delivers the boundedness of $x$. 
\end{remark}

We come now the main result of the paper. 

\begin{theorem}\label{conv-kl} Suppose that $f+g$ is bounded from below and $\eta>0$ fulfills the inequality \eqref{eta-beta}. For $x_0\in\R^n$, let  $x \in C^1([0,+\infty), \R^n)$ be the unique global solution of \eqref{intr-dyn-syst} and consider the function
$$H:\R^n\times\R^n\to\R\cup\{+\infty\},\, H(u,v)=(f+g)(u)+\frac{1}{2\eta}\|u-v\|^2.$$
Suppose that  $x$ is bounded and $H$ is a KL function. Then the following statements are true:
\begin{itemize}\item[(a)] $\dot x\in L^1([0,+\infty);\R^n)$;
\item[(b)] there exists $\ol x\in\crit(f+g)$ such that $\lim_{t\rightarrow+\infty}x(t)=\ol x$.
\end{itemize}
\end{theorem}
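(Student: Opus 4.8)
The plan is to deploy the by-now-standard three-ingredient scheme---sufficient decrease, a subgradient bound, and the Kurdyka--\L ojasiewicz inequality---on the lifted function $H$, all three ingredients being already in place thanks to Lemmas~\ref{l-h123} and \ref{l}. Write $\gamma:=\tfrac1\eta-(3+\eta\beta)\beta$, which is positive by \eqref{eta-beta}, put $\mu:=\beta+\tfrac1\eta>0$, and set $\sigma(t):=H\big(\dot x(t)+x(t),x(t)\big)$. By ($H_1$), $\sigma$ is locally absolutely continuous, nonincreasing, and $\ell:=\lim_{t\to+\infty}\sigma(t)\in\R$ exists. The first thing I would do is identify $\ell$ as the common value of $H$ on the limit set $\Omega:=\omega(\dot x+x,x)$: given $\ol x\in\omega(x)$ with $x(t_k)\to\ol x$, Lemma~\ref{l-decr}(a) gives $\dot x(t_k)+x(t_k)\to\ol x$, so $(\ol x,\ol x)\in\Omega$, and ($H_3$) combined with Lemma~\ref{l}(d) yields $\ell=H(\ol x,\ol x)=:\ol H=\min_\Omega H$.

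Next I would split into two cases. If $\sigma(t_0)=\ol H$ for some $t_0\ge0$, then $\sigma\equiv\ol H$ on $[t_0,+\infty)$ by monotonicity, so the first inequality in ($H_1$) forces $\|\dot x(t)\|=0$ for a.e.\ $t\ge t_0$; hence $x$ is eventually constant and both conclusions are immediate. Otherwise $\sigma(t)>\ol H$ for all $t\ge0$. Invoke the uniformized KL property (Lemma~\ref{unif-KL-property}) for the set $\Omega$, which is nonempty and compact by Lemma~\ref{l}(c) and on which $H$ is finite and constant by Lemma~\ref{l}(d): there exist $\varepsilon,\eta_0>0$ and $\varphi\in\Theta_{\eta_0}$ such that \eqref{KL-ineq} holds at each point of $\{w:\dist(w,\Omega)<\varepsilon\}\cap\{w:\ol H<H(w)<\ol H+\eta_0\}$. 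Since $\dist\big((\dot x(t)+x(t),x(t)),\Omega\big)\to0$ by Lemma~\ref{l}(b) and $\sigma(t)\downarrow\ol H$, there is $t_1\ge0$ with $w(t):=(\dot x(t)+x(t),x(t))$ in that intersection for all $t\ge t_1$. Then \eqref{KL-ineq} together with ($H_2$) gives, for every $t\ge t_1$ (note $\dot x(t)\ne0$, since otherwise $\dist(0,\partial H(w(t)))=0$ and \eqref{KL-ineq} would fail),
$$\varphi'\big(\sigma(t)-\ol H\big)\ \ge\ \frac{1}{\dist\big(0,\partial H(w(t))\big)}\ \ge\ \frac{1}{\mu\,\|\dot x(t)\|}.$$

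Finally I would differentiate the composition $\varphi\circ(\sigma-\ol H)$. On each bounded interval $[t_1,T]$ the continuous, strictly positive function $\sigma-\ol H$ takes values in a compact subinterval of $(0,\eta_0)$ on which $\varphi$ is $C^1$, hence Lipschitz; therefore $t\mapsto\varphi(\sigma(t)-\ol H)$ is absolutely continuous on $[t_1,T]$ and, for a.e.\ $t$,
$$-\frac{d}{dt}\varphi\big(\sigma(t)-\ol H\big)=-\varphi'\big(\sigma(t)-\ol H\big)\dot\sigma(t)\ \ge\ \gamma\,\varphi'\big(\sigma(t)-\ol H\big)\|\dot x(t)\|^2\ \ge\ \frac{\gamma}{\mu}\,\|\dot x(t)\|,$$
using ($H_1$) in the middle and the displayed KL estimate at the end. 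Integrating over $[t_1,T]$ and using $\varphi\ge0$ yields $\int_{t_1}^{T}\|\dot x(t)\|\,dt\le\frac{\mu}{\gamma}\,\varphi\big(\sigma(t_1)-\ol H\big)$; letting $T\to+\infty$ and adding the finite quantity $\int_0^{t_1}\|\dot x\|$ proves (a). For (b), $L^1$-integrability of $\dot x$ shows that $x(\cdot)$ is Cauchy as $t\to+\infty$ (because $\|x(t)-x(s)\|\le\int_s^t\|\dot x\|$), so $\ol x:=\lim_{t\to+\infty}x(t)$ exists; since $\ol x\in\omega(x)$, Lemma~\ref{l-lim-crit-f} gives $\ol x\in\crit(f+g)$.

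The one genuinely delicate point---and the step I expect to require the most care---is the interface between the KL inequality (valid only where $\sigma(t)>\ol H$ strictly and $w(t)$ is near $\Omega$) and the chain-rule differentiation of $\varphi\circ(\sigma-\ol H)$ in the regime $\sigma(t)\to\ol H$: one must separate out the degenerate case $\sigma(t_0)=\ol H$ by hand and, on the complementary region, certify absolute continuity of the composite on every $[t_1,T]$ before passing to the limit $T\to+\infty$. The rest is a mechanical assembly of ($H_1$)--($H_3$), Lemma~\ref{l}, and Lemma~\ref{unif-KL-property}.
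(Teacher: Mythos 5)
Your proposal is correct and follows essentially the same route as the paper's proof: the same case distinction, the uniformized KL property of Lemma~\ref{unif-KL-property} applied to $\Omega=\omega(\dot x+x,x)$ via Lemma~\ref{l}, the estimates ($H_1$)--($H_3$), and integration of $\frac{d}{dt}\varphi\big(H(\dot x(t)+x(t),x(t))-H(\ol x,\ol x)\big)$ to get $\dot x\in L^1$ and hence convergence of $x(t)$ to a point of $\crit(f+g)$. Your extra care about absolute continuity of the composition with $\varphi$ and the observation that $\dot x(t)\neq 0$ on the relevant region are fine refinements of the same argument, not a different approach.
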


\begin{proof} According to Lemma \ref{l}, we can choose an element $\ol x\in\crit (f+g)$ such that 
$(\ol x,\ol x)\in \omega (\dot x+x,x)$. According to Lemma \ref{l-h123}, it follows that
$$\lim_{t\rightarrow+\infty}H\big(\dot x(t)+x(t),x(t)\big)=H(\ol x,\ol x).$$

We treat the following two cases separately. 

I. There exists $\ol t\geq 0$ such that $$H\big(\dot x(\ol t)+x(\ol t),x(\ol t)\big)=H(\ol x,\ol x).$$ Since from 
Lemma \ref{l-h123}(H1) we have $$\frac{d}{dt}H\big(\dot x(t)+x(t),x(t)\big) \leq 0 \ \forall t \in [0,+\infty),$$  we obtain  for every $t\geq \ol t$ that
$$H\big(\dot x(t)+x(t),x(t)\big)\leq H\big(\dot x(\ol t)+x(\ol t),x(\ol t)\big)=H(\ol x,\ol x).$$ Thus $H\big(\dot x(t)+x(t),x(t)\big)=H(\ol x,\ol x)$ for every $t\geq \ol t$. This yields by Lemma \ref{l-h123}(H1) that 
$\dot x(t)=0$ for almost every $t \in [\ol t, +\infty)$, hence $x$ is constant on $[\ol t,+\infty)$ and the conclusion follows. 

II. For every $t\geq 0$ it holds $H\big(\dot x(t)+x(t),x(t)\big)>H(\ol x,\ol x).$ Take $\Omega=\omega(\dot x+x,x)$. 

In virtue of Lemma \ref{l}(c) and (d) and since $H$ is a KL function, by Lemma \ref{unif-KL-property}, there exist positive numbers $\epsilon$ and $\eta$ and 
a concave function $\varphi\in\Theta_{\eta}$ such that for all
\begin{align}\label{int-H} 
(x,y)\in & \{(u,v)\in\R^n\times\R^n: \dist((u,v),\Omega)<\epsilon\} \nonumber \\ 
 & \cap\{(u,v)\in\R^n\times\R^n:H(\ol x,\ol x)<H(u,v)<H(\ol x,\ol x)+\eta\}\end{align}
one has
\begin{equation}\label{ineq-H}\varphi'(H(x,y)-H(\ol x,\ol x))\dist((0,0),\partial H(x,y))\ge 1.\end{equation}

Let $t_1\geq 0$ be such that $H\big(\dot x(t)+x(t),x(t)\big)<H(\ol x,\ol x)+\delta$ for all $t\geq t_1$. Since 
$\lim_{t\to+\infty}\dist\Big(\big(\dot x(t)+x(t),x(t)\big),\Omega\Big)=0$, there exists $t_2\geq 0$ such that 
$\dist\Big(\big(\dot x(t)+x(t),x(t)\big),\Omega\Big)<\epsilon$ for all $t\geq t_2$. Hence for all $t\geq T:=\max\{t_1,t_2\}$, 
$\big(\dot x(t)+x(t),x(t)\big)$ belongs to the intersection in \eqref{int-H}. Thus, according to \eqref{ineq-H}, for every $t\geq T$ we have
\begin{equation}\label{ineq-Ht1}\varphi'\Big(H\big(\dot x(t)+x(t),x(t)\big)-H(\ol x,\ol x)\Big)
\dist\Big((0,0),\partial H\big(\dot x(t)+x(t),x(t)\big)\Big)\ge 1.\end{equation}
By applying Lemma \ref{l-h123}(H2) we obtain for almost every $t \in [T, +\infty)$
\begin{equation}\label{ineq-Ht2}(\beta+\eta^{-1})\|\dot x(t)\|\varphi'\Big(H\big(\dot x(t)+x(t),x(t)\big)-H(\ol x,\ol x)\Big)
\ge 1.\end{equation}
From here, by using Lemma \ref{l-h123}(H1) and that $\varphi'>0$ and 
\begin{align*}
& \frac{d}{dt}\varphi\Big(H\big(\dot x(t)+x(t),x(t)\big)-H(\ol x,\ol x)\Big)=\\
& \varphi'\Big(H\big(\dot x(t)+x(t),x(t)\big)-H(\ol x,\ol x)\Big)\frac{d}{dt}H\big(\dot x(t)+x(t),x(t)\big),
\end{align*}
we deduce that for almost every $t \in [T, +\infty)$ it holds
\begin{equation}\label{ineq-pt-conv-r} \frac{d}{dt}\varphi\Big(H\big(\dot x(t)+x(t),x(t)\big)-H(\ol x,\ol x)\Big)\leq 
-\left(\beta+\eta^{-1}\right)^{-1}\left[\frac{1}{\eta}-(3+\eta\beta)\beta\right]\|\dot x(t)\|.\end{equation}
Since $\varphi$ is bounded from below, by taking into account \eqref{eta-beta}, it follows  $\dot x\in L^1([0,+\infty);\R^n)$. From here we obtain that $\lim_{t\rightarrow+\infty}x(t)$ exists and this closes the proof.
\end{proof}

Since the class of semi-algebraic functions is closed under addition (see for example \cite{b-sab-teb}) and 
$(u,v) \mapsto c\|u-v\|^2$ is semi-algebraic for $c>0$, we can stat the following direct consequence of the previous theorem.  

\begin{corollary}\label{conv-semi-alg}Suppose that $f+g$ is bounded from below and $\eta>0$ fulfills the inequality \eqref{eta-beta}. For $x_0\in\R^n$, let  $x \in C^1([0,+\infty), \R^n)$ be the unique global solution of \eqref{intr-dyn-syst}. Suppose that  $x$ is bounded and $f+g$ is semi-algebraic. Then the following statements are true:
\begin{itemize}\item[(a)] $\dot x\in L^1([0,+\infty);\R^n)$;
\item[(b)] there exists $\ol x\in\crit(f+g)$ such that $\lim_{t\rightarrow+\infty}x(t)=\ol x$.
\end{itemize}
\end{corollary}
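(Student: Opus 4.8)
The plan is to obtain Corollary \ref{conv-semi-alg} as an immediate consequence of Theorem \ref{conv-kl}: the only thing to verify is that the semi-algebraicity of $f+g$ forces the auxiliary function $H$ from Theorem \ref{conv-kl} to be a KL function, after which every hypothesis of that theorem is in force and its conclusion applies verbatim.

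First I would recall the classical fact (see \cite{b-d-l2006, b-d-l-m2010, b-sab-teb}) that every proper and lower semicontinuous semi-algebraic function satisfies the Kurdyka--\L{}ojasiewicz property at each point of its domain, hence is a KL function in the sense of Definition \ref{KL-property}. It therefore suffices to argue that $H(u,v)=(f+g)(u)+\frac{1}{2\eta}\|u-v\|^2$ is semi-algebraic on $\R^n\times\R^n$. To see this, I would decompose $H$ as a sum of two semi-algebraic functions: on one hand, since $f+g$ is semi-algebraic on $\R^n$, the function $(u,v)\mapsto(f+g)(u)$ is semi-algebraic on $\R^n\times\R^n$, its graph being the image of $\gr(f+g)\times\R^n$ under a coordinate permutation; on the other hand, $(u,v)\mapsto\frac{1}{2\eta}\|u-v\|^2$ is semi-algebraic, its graph being cut out by the polynomial equation $2\eta\,r=\|u-v\|^2$. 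Since the class of semi-algebraic functions is closed under addition (see \cite{b-sab-teb}), $H$ is semi-algebraic, and thus a KL function.

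Finally, the remaining hypotheses of Theorem \ref{conv-kl} --- namely that $f+g$ is bounded from below, that $\eta>0$ satisfies \eqref{eta-beta}, and that the trajectory $x$ is bounded --- are exactly those assumed in the corollary. Invoking Theorem \ref{conv-kl} then yields both (a) $\dot x\in L^1([0,+\infty);\R^n)$ and (b) the existence of $\ol x\in\crit(f+g)$ with $\lim_{t\rightarrow+\infty}x(t)=\ol x$, which completes the argument. I do not expect a genuine obstacle here; the only points requiring a little care are the bookkeeping with graphs when passing from semi-algebraicity of $f+g$ on $\R^n$ to that of $(u,v)\mapsto(f+g)(u)$ on the product space, and the use of closedness of semi-algebraic functions under sums --- both standard and recorded in the cited references.
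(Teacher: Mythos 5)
Your proposal is correct and follows exactly the paper's route: verify that $H$ is semi-algebraic (quadratic coupling term plus the semi-algebraic $f+g$, using closedness under addition), hence a KL function, and then apply Theorem \ref{conv-kl} directly. The extra bookkeeping you give for the graph of $(u,v)\mapsto(f+g)(u)$ is fine but not needed beyond what the paper already records.
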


\subsection{Convergence rates}\label{subsec32}

In this subsection we investigate the convergence rates of the trajectories generated by the dynamical system \eqref{intr-dyn-syst}. When solving optimization problems involving KL functions, convergence rates have been proved to depend on the so-called  \L{}ojasiewicz exponent  (see \cite{lojasiewicz1963, b-d-l2006, attouch-bolte2009, f-g-peyp}). The main result of this subsection refer to the KL functions which satisfy Definition \ref{KL-property}  for $\varphi(s)=Cs^{1-\theta}$, where $C>0$ and $\theta\in(0,1)$. We recall the following definition considered in \cite{attouch-bolte2009}. 

\begin{definition}\label{kl-phi} \rm Let $f:\R^n\rightarrow\R\cup\{+\infty\}$ be a proper and lower semicontinuous function. 
The function $f$ is said to have the \L{}ojasiewicz property, if for every $\ol x\in\crit f$ there exist $C,\varepsilon >0$ and 
$\theta\in(0,1)$ such that 
\begin{equation}\label{kl-phi-ineq}|f(x)-f(\ol x)|^{\theta}\leq C\|x^*\| \ \mbox{for every} \ x \ \mbox{fulfilling} \ \|x-\ol x\|<\varepsilon \mbox{ and every} \ x^*\in\partial f(x).\end{equation}
\end{definition}

According to \cite[Lemma 2.1 and Remark 3.2(b)]{att-b-red-soub2010}, the KL property is automatically 
satisfied at any noncritical point, fact which motivates the restriction to critical points in the above definition. The real number $\theta$ in the above definition is called \emph{\L{}ojasiewicz exponent} of the function $f$ at the critical point  $\ol x$. 

\begin{theorem}\label{conv-r} Suppose that $f+g$ is bounded from below and $\eta>0$ fulfills the inequality \eqref{eta-beta}. For $x_0\in\R^n$, let  $x \in C^1([0,+\infty), \R^n)$ be the unique global solution of \eqref{intr-dyn-syst} and consider the function
$$H:\R^n\times\R^n\to\R\cup\{+\infty\},\, H(u,v)=(f+g)(u)+\frac{1}{2\eta}\|u-v\|^2.$$
Suppose that  $x$ is bounded and $H$ satisfies Definition \ref{KL-property}  for $\varphi(s)=Cs^{1-\theta}$, where $C>0$ and $\theta\in(0,1)$. Then there exists $\ol x\in\crit (f+g)$ such that 
$\lim_{t\rightarrow+\infty}x(t)=\ol x.$ Let $\theta$ be the \L{}ojasiewicz exponent of $H$ at $(\ol x,\ol x)\in\crit H$, according to the Definition \ref{kl-phi}. Then there exist 
$a,b,c,d>0$ and $t_0\geq 0$ such that for every $t\geq t_0$ the following statements are true: 
\begin{itemize}\item[(a)] if $\theta\in (0,\frac{1}{2})$, then $x$ converges in finite time;
\item[(b)] if $\theta=\frac{1}{2}$, then $\|x(t)-\ol x\|\leq a\exp(-bt)$;
\item[(c)] if $\theta\in (\frac{1}{2},1)$, then $\|x(t)-\ol x\|\leq (ct+d)^{-\left(\frac{1-\theta}{2\theta-1}\right)}$.
\end{itemize}
\end{theorem}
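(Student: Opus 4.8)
The plan is to follow the standard KL-based convergence-rate template (as in \cite{attouch-bolte2009, f-g-peyp}), using as the central object the quantity $\psi(t) := H\big(\dot x(t)+x(t),x(t)\big)-H(\ol x,\ol x)$, which by Theorem \ref{conv-kl} and Lemma \ref{l-h123}(H1) is nonnegative, nonincreasing, and tends to $0$. First I would note that the existence of $\ol x \in \crit(f+g)$ with $\lim_{t\to\infty}x(t)=\ol x$ and $\dot x\in L^1$ is already furnished by Theorem \ref{conv-kl}, since the hypothesis here (Definition \ref{KL-property} with $\varphi(s)=Cs^{1-\theta}$) is a special case of $H$ being a KL function. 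The rate estimates will come from comparing the tail integral $\int_t^\infty\|\dot x(s)\|\,ds$ — which dominates $\|x(t)-\ol x\|$ by the triangle inequality applied to $x(t)-\ol x = -\int_t^\infty \dot x(s)\,ds$ — with a differential inequality for $\psi$.

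The key chain of steps: from Lemma \ref{l-h123}(H2), $\dist((0,0),\partial H(\dot x(t)+x(t),x(t))) \le (\beta+\eta^{-1})\|\dot x(t)\|$, and the \L ojasiewicz inequality \eqref{kl-phi-ineq} gives $\psi(t)^\theta \le C(\beta+\eta^{-1})\|\dot x(t)\|$ for $t$ large enough (say $t\ge t_0$, once $x(t)$ is within the relevant neighborhood of $\ol x$ and, in case II, $\psi(t)>0$; case I, where $\psi$ vanishes in finite time, is handled exactly as in Theorem \ref{conv-kl} and yields (a) trivially). Meanwhile Lemma \ref{l-h123}(H1) yields $\dot\psi(t) \le -m\|\dot x(t)\|^2$ with $m := \frac1\eta-(3+\eta\beta)\beta>0$. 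Combining, $\|\dot x(t)\| \ge \big(C(\beta+\eta^{-1})\big)^{-1}\psi(t)^\theta$, so $\|\dot x(t)\|^2 \ge \kappa\,\psi(t)^{2\theta}$ for a constant $\kappa>0$, hence $\dot\psi(t) \le -m\kappa\,\psi(t)^{2\theta}$. Integrating this scalar ODE inequality: if $2\theta<1$ (i.e. $\theta<\tfrac12$) then $\psi$ reaches $0$ in finite time, and then $\dot x \equiv 0$ beyond that time by (H1), giving (a); if $2\theta=1$ then $\psi(t)\le \psi(t_0)\exp(-m\kappa(t-t_0))$, i.e. exponential decay; if $2\theta>1$ then $\psi(t) \le (ct+d)^{-1/(2\theta-1)}$ for suitable $c,d>0$.

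It remains to pass from the rate for $\psi$ to the rate for $\|x(t)-\ol x\|$. For (b) and (c) one uses $\|x(t)-\ol x\|\le\int_t^\infty\|\dot x(s)\|\,ds$ together with $\|\dot x(s)\|^2 \le -m^{-1}\dot\psi(s)$ and the just-obtained bound on $\psi$. In the exponential case, $\int_t^\infty \|\dot x(s)\| ds \le \int_t^\infty \sqrt{-m^{-1}\dot\psi(s)}\,ds$; a cleaner route is the one used in \cite{attouch-bolte2009}: apply the \L ojasiewicz/(H2) bound again to get $\|\dot x(s)\|\le C(\beta+\eta^{-1})m^{-1}\big(-\dot\psi(s)\big)\psi(s)^{-\theta}$ and recognize the right side, up to constants, as $-\frac{d}{ds}\big[\tfrac{1}{1-\theta}\psi(s)^{1-\theta}\big]$, so that $\int_t^\infty\|\dot x(s)\|ds \le \text{const}\cdot\psi(t)^{1-\theta}$; then substitute the decay rate of $\psi$. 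With $\theta=\tfrac12$ this gives $\psi(t)^{1/2}\le a'\exp(-b't)$, hence (b); with $\theta\in(\tfrac12,1)$, $\psi(t)^{1-\theta} \le \text{const}\cdot(ct+d)^{-(1-\theta)/(2\theta-1)}$, hence (c) after renaming constants. The main obstacle — really a bookkeeping one rather than a conceptual one — is organizing the two regimes (the finite-time case I versus the asymptotic case II) and tracking the neighborhood/threshold $t_0$ after which both the $\dist(x(t),\Omega)<\epsilon$ condition and the \L ojasiewicz inequality at $(\ol x,\ol x)$ simultaneously apply; once that $t_0$ is fixed, each of (a), (b), (c) follows from the corresponding elementary integration of $\dot\psi \le -m\kappa\,\psi^{2\theta}$.
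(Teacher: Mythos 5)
Your proposal is correct and reaches the stated rates, but it organizes the argument differently from the paper, so a brief comparison is in order. Both proofs use the same ingredients: the decrease estimate (H1), the subgradient bound (H2), the \L{}ojasiewicz inequality for $H$ at $(\ol x,\ol x)$ with exponent $\theta$, and the majorization $\|x(t)-\ol x\|\le\sigma(t):=\int_t^{+\infty}\|\dot x(s)\|\,ds$. You place the differential inequality on the energy gap $\psi(t)=H\big(\dot x(t)+x(t),x(t)\big)-H(\ol x,\ol x)$, obtaining $\dot\psi\le -c\,\psi^{2\theta}$, integrate it in the three regimes, and then transfer the rate to the trajectory via $\sigma(t)\le \mathrm{const}\cdot\psi(t)^{1-\theta}$, which is precisely the paper's inequality \eqref{ineq1} (obtained there by integrating \eqref{ineq-pt-conv-r}). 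The paper instead combines \eqref{ineq1} with the \L{}ojasiewicz bound $\psi(t)^{\theta}\le C N\|\dot x(t)\|=-CN\dot\sigma(t)$ to get a single differential inequality directly on the tail length, $\dot\sigma\le-\alpha\,\sigma^{\theta/(1-\theta)}$, and integrates that; no transfer step is needed and no rate for $\psi$ is ever derived. The exponent arithmetic coincides, since $\tfrac{1}{2\theta-1}\cdot(1-\theta)=\tfrac{1-\theta}{2\theta-1}$, so the two routes give identical conclusions; yours additionally yields decay rates for the regularized function values as a byproduct, while the paper's is slightly more economical. Two minor points: your first idea for the transfer in case (b), bounding $\int_t^{\infty}\sqrt{-m^{-1}\dot\psi(s)}\,ds$, would not work as stated (a decay rate for $\psi$ gives no pointwise control of $-\dot\psi$), but the Attouch--Bolte route you actually adopt, integrating $\|\dot x(s)\|\le \mathrm{const}\cdot\big(-\tfrac{d}{ds}\,\psi(s)^{1-\theta}\big)$, is sound; and in case (a) one should make explicit, as you essentially do, that once $\psi$ vanishes it remains zero by monotonicity and nonnegativity, whence (H1) forces $\dot x=0$ almost everywhere thereafter, which is exactly Case I in the proof of Theorem \ref{conv-kl}.
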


\begin{proof} We define for every $t \geq 0$ (see also \cite{b-d-l2006})  
$$\sigma(t)=\int_{t}^{+\infty}\|\dot x(s)\|ds \ \mbox{ for all }t\geq 0.$$
It is immediate that 
\begin{equation}\label{x-sigma}\|x(t)-\ol x\|\leq \sigma(t) \ \forall t\geq 0.\end{equation}

Indeed, this follows by noticing that for $T \geq t$
\begin{align*}\|x(t)-\ol x\|= \ & \|x(T)-\ol x-\int_t^T\dot x(s)ds\|\\
                          \leq  & \ \|x(T)-\ol x\|+\int_{t}^T\|\dot x(s)\|ds,
                          \end{align*}
and by letting afterwards $T\rightarrow +\infty$.  

We assume that for every $t\geq 0$ we have $H\big(\dot x(t)+x(t),x(t)\big)>H(\ol x,\ol x).$ As seen in the proof of  Theorem \ref{conv-kl}, in the other case the conclusion follows automatically. Furthermore, by invoking again the proof of above-named result, there exist $t_0\geq 0$ and $M>0$ such that for every $t\geq t_0$ (see \eqref{ineq-pt-conv-r})
\begin{equation*} M \|\dot x(t)\|+ \frac{d}{dt}\left[\Big(H\big(\dot x(t)+x(t),x(t)\big)-H(\ol x,\ol x)\Big)\right]^{1-\theta}\leq 0\end{equation*}
and
\begin{equation*} \| \big(\dot x(t)+x(t),x(t)\big)-(\ol x,\ol x)\|<\varepsilon.\end{equation*}

We derive by integration (for $T\geq t\geq t_0$) $$M\int_t^{T}\|\dot x(s)\|ds+\left[\Big(H\big(\dot x(T)+x(T),x(T)\big)-H(\ol x,\ol x)\Big)\right]^{1-\theta} $$$$
\leq\left[\Big(H\big(\dot x(t)+x(t),x(t)\big)-H(\ol x,\ol x)\Big)\right]^{1-\theta},$$
hence 
\begin{equation}\label{ineq1}M\sigma (t)\leq \left[\Big(H\big(\dot x(t)+x(t),x(t)\big)-H(\ol x,\ol x)\Big)\right]^{1-\theta} \ \forall t\geq t_0.\end{equation}

Since  $\theta$ is the \L{}ojasiewicz exponent of $H$ at $(\ol x,\ol x)$, we have 
$$|H\big(\dot x(t)+x(t),x(t)\big)-H(\ol x,\ol x)|^{\theta}\leq C\|x^*\| \ \forall x^*\in \partial H\big(\dot x(t)+x(t),x(t)\big)$$
for every $t\geq t_0$. 
According to Lemma \ref{l-h123}(H2), we can find a constant $N>0$ such that for almost every $t \in [t_0, +\infty)$ there exists $x^*(t)\in \partial H\big(\dot x(t)+x(t),x(t)\big) $ fulfilling
$$\|x^*(t)\|\leq N\|\dot x(t)\|.$$
From the above two inequalities we derive for almost every $t \in [t_0, +\infty)$
$$|H\big(\dot x(t)+x(t),x(t)\big)-H(\ol x,\ol x)|^{\theta}\leq C\cdot N\|\dot x(t)\|,$$ 
which combined with \eqref{ineq1} yields 
\begin{equation}\label{ineq2}M\sigma (t)\leq (C\cdot N\|\dot x(t)\|)^{\frac{1-\theta}{\theta}}.\end{equation}

Since \begin{equation}\label{dsigma}\dot \sigma (t)=-\|\dot  x(t)\|\end{equation} we conclude that there exists $\alpha>0$ such that for almost every $t \in [t_0, +\infty)$
\begin{equation}\label{sigma} \dot\sigma (t)\leq -\alpha\big(\sigma(t)\big)^{\frac{\theta}{1-\theta}}. 
\end{equation}

If $\theta=\frac{1}{2}$, then $$\dot\sigma (t)\leq -\alpha\sigma(t)$$ for almost every $t \in [t_0, +\infty)$. By multiplying with $\exp(\alpha t)$ and integrating afterwards from $t_0$ to $t$, it follows that there exist $a,b>0$ such that 
$$\sigma (t)\leq a\exp(-bt) \ \forall t\geq t_0$$ and the conclusion of (b) is immediate from \eqref{x-sigma}. 

Assume that $0<\theta<\frac{1}{2}$. We obtain from \eqref{sigma} 
$$\frac{d}{dt}\left(\sigma^{\frac{1-2\theta}{1-\theta}}\right)\leq-\alpha \frac{1-2\theta}{1-\theta}$$
for almost every $t \in [t_0, +\infty)$.

By integration we get $$\big(\sigma(t)\big)^{\frac{1-2\theta}{1-\theta}}\leq -\ol \alpha t+\ol \beta \ \forall t\geq t_0,$$ 
where $\ol \alpha>0$. Thus there exists $T\geq 0$ such that $$\sigma (T)\leq 0 \  \forall t\geq T,$$
which implies that $x$ is constant on $[T,+\infty)$. 

Finally, suppose that $\frac{1}{2}<\theta<1$. We obtain from \eqref{sigma} 
$$\frac{d}{dt}\left(\sigma^{\frac{1-2\theta}{1-\theta}}\right)\geq\alpha \frac{2\theta-1}{1-\theta}.$$
By integration one derives $$\sigma(t)\leq (ct+d)^{-\left(\frac{1-\theta}{2\theta-1}\right)} \ \forall t\geq t_0,$$ where $c,d>0$, and (c) follows from \eqref{x-sigma}.
\end{proof}


\begin{thebibliography}{99}

\bibitem{abbas-att-arx14} B. Abbas, H. Attouch, {\it Dynamical systems and forward-backward algorithms associated 
with the sum of a convex subdifferential and a monotone cocoercive operator}, Optimization, DOI: 10.1080/02331934.2014.971412, 2014

\bibitem{abbas-att-sv} B. Abbas, H. Attouch, B.F. Svaiter, {\it Newton-like dynamics and forward-backward methods for 
structured monotone inclusions in Hilbert spaces}, Journal of Optimization Theory and its Applications 161(2), 331--360, 2014

\bibitem{alvarez2000} F. Alvarez, {\it On the minimizing property of a second order dissipative system in Hilbert spaces}, SIAM Journal
on Control and Optimization 38(4), 1102--1119, 2000

\bibitem{alvarez2004} F. Alvarez, {\it Weak convergence of a relaxed and inertial hybrid projection-proximal point algorithm for maximal monotone operators in Hilbert space}, SIAM Journal on Optimization 14(3), 773--782, 2004

\bibitem{alv-att-sva} F. Alvarez, H. Attouch, {\it An inertial proximal method for maximal monotone operators via discretization of  a nonlinear oscillator with damping}, Set-Valued Analysis 9(1-2), 3–11, 2001

\bibitem{alv-att-bolte-red} F. Alvarez, H. Attouch, J. Bolte, P. Redont, {\it A second-order gradient-like dissipative dynamical system 
with Hessian-driven damping. Application to optimization and mechanics}, Journal de Math\'{e}matiques Pures et Appliqu\'{e}es (9) 81(8), 
747--779, 2002

\bibitem{antipin} A.S. Antipin, {\it Minimization of convex functions on convex sets by means of differential equations}, 
(Russian) Differentsial'nye Uravneniya 30(9), 1475--1486, 1994; translation in Differential Equations 30(9), 1365--1375, 1994

\bibitem{att-alv} H. Attouch, F. Alvarez, {\it The heavy ball with friction dynamical system for convex constrained 
minimization problems}, in: Optimization (Namur, 1998), 25--35, in: Lecture Notes in Economics and Mathematical Systems 481, Springer, Berlin, 2000

\bibitem{abm} H. Attouch, G. Buttazzo, G. Michaille, {\it Variational Analysis in Sobolev and BV Spaces: Applications to PDEs and Optimization, Second Edition},
MOS-SIAM Series on Optimization, Philadelphia, 2014

\bibitem{att-alv-sv} H. Attouch, M. Marques Alves, B.F. Svaiter, {\it A dynamic approach to a proximal-Newton method for monotone inclusions 
in Hilbert spaces, with complexity $O(1/{n^2})$}, arXiv:1502.04286v1    

\bibitem{attouch-bolte2009} H. Attouch, J. Bolte, {\it On the convergence of the proximal algorithm for nonsmooth functions involving analytic
features}, Mathematical Programming 116(1-2) Series B, 5--16, 2009

\bibitem{att-b-red-soub2010} H. Attouch, J. Bolte, P. Redont, A. Soubeyran, {\it Proximal alternating minimization and projection
methods for nonconvex problems: an approach based on the Kurdyka-\L{}ojasiewicz inequality}, Mathematics of Operations Research 
35(2), 438--457, 2010

\bibitem{att-b-sv2013} H. Attouch, J. Bolte, B.F. Svaiter, {\it Convergence of descent methods for semi-algebraic and tame problems: 
proximal algorithms, forward-backward splitting, and regularized Gauss-Seidel methods}, Mathematical Programming 137(1-2) Series A, 91--129, 2013

\bibitem{att-cza-10} H. Attouch, M.-O. Czarnecki, {\it Asymptotic behavior of coupled dynamical systems with multiscale aspects}, 
Journal of Differential Equations 248(6), 1315-1344, 2010

\bibitem{att-g-r} H. Attouch, X. Goudou, P. Redont, {\it The heavy ball with friction method. I. The continuous dynamical system: 
global exploration of the local minima of a real-valued function by asymptotic analysis of a dissipative dynamical system}, 
Communications in Contemporary Mathematics 2(1), 1--34, 2000

\bibitem{att-sv2011} H. Attouch, B.F. Svaiter, {\it A continuous dynamical Newton-like approach to solving monotone inclusions}, 
SIAM Journal on Control and Optimization 49(2), 574--598, 2011

\bibitem{bb-ts-cont} S. Banert, R.I. Bo\c{t}, {\it A forward-backward-forward differential equation and its asymptotic properties}, 
arXiv:1503.07728, 2015

\bibitem{bauschke-book} H.H. Bauschke, P.L. Combettes, {\it Convex Analysis and Monotone Operator Theory in Hilbert Spaces}, CMS Books in Mathematics, Springer, New York, 2011

\bibitem{bolte-2003} J. Bolte, {\it Continuous gradient projection method in Hilbert spaces}, Journal of Optimization Theory and its 
Applications 119(2), 235--259, 2003

\bibitem{b-d-l2006} J. Bolte, A. Daniilidis, A. Lewis, {\it The \L{}ojasiewicz inequality for nonsmooth subanalytic functions with applications 
to subgradient dynamical systems}, SIAM Journal on Optimization 17(4), 1205--1223, 2006

\bibitem{b-d-l-s2007} J. Bolte, A. Daniilidis, A. Lewis, M. Shota, {\it Clarke subgradients of stratifiable functions}, 
SIAM Journal on Optimization 18(2), 556--572, 2007

\bibitem{b-d-l-m2010} J. Bolte, A. Daniilidis, O. Ley, L. Mazet, {\it Characterizations of \L{}ojasiewicz inequalities:
subgradient flows, talweg, convexity}, Transactions of the American Mathematical Society 362(6), 3319--3363, 2010

\bibitem{b-sab-teb} J. Bolte, S. Sabach, M. Teboulle, {\it Proximal alternating linearized minimization 
for nonconvex and nonsmooth problems}, Mathematical Programming Series A (146)(1--2), 459--494, 2014

\bibitem{b-c-inertial-nonc-ts} R.I. Bo\c t, E.R. Csetnek, 
{\it An inertial Tseng's type proximal algorithm for nonsmooth and nonconvex optimization problems}, 
Journal of Optimization Theory and Applications, DOI 10.1007/s10957-015-0730-z 

\bibitem{b-c-dyn-KM} R.I. Bo\c t, E.R. Csetnek, {\it A dynamical system associated with the fixed points set of a 
nonexpansive operator}, Journal of Dynamics and Differential Equations, DOI: 10.1007/s10884-015-9438-x, 2015

\bibitem{b-c-dyn-pen} R.I. Bo\c t, E.R. Csetnek, {\it Approaching the solving of constrained variational inequalities via penalty 
term-based dynamical systems}, arXiv:1503.01871

\bibitem{b-c-dyn-sec-ord} R.I. Bo\c t, E.R. Csetnek, {\it Second order forward-backward dynamical systems for
monotone inclusion problems}, arXiv:1503.04652

\bibitem{b-c-conv-rate-cont} R.I. Bo\c t, E.R. Csetnek, {\it Convergence rates for forward-backward dynamical systems 
associated with strongly monotone inclusions}, arXiv:1504.01863, 2015

\bibitem{bcl} R.I. Bo\c t, E.R. Csetnek, S. L\'aszl\'o, {\it An inertial forward-backward algorithm for the minimization of the sum of 
two nonconvex functions}, arXiv:1410.0641, 2014

\bibitem{brezis} H. Br\'{e}zis, {\it Op\'{e}rateurs maximaux monotones et semi-groupes de contractions dans les espaces de Hilbert}, 
North-Holland Mathematics Studies No. 5, Notas de Matem\'{a}tica (50), North-Holland/Elsevier, New York, 1973

\bibitem{c-pesquet-r} E. Chouzenoux, J.-C. Pesquet, A. Repetti, {\it Variable metric forward-backward algorithm for minimizing the sum of a 
differentiable function and a convex function}, Journal of Optimization Theory and its Applications 162(1), 107--132, 2014

\bibitem{f-g-peyp} P. Frankel, G. Garrigos, J. Peypouquet, {\it Splitting methods with variable metric for Kurdyka-\L{}ojasiewicz functions and general 
convergence rates}, Journal of Optimization Theory and its Applications, DOI 10.1007/s10957-014-0642-3

\bibitem{haraux} A. Haraux, {\it Syst\`{e}mes Dynamiques Dissipatifs et Applications},   
Recherches en Math\'{e}- matiques Appliqu\'{e}ées 17, Masson, Paris, 1991

\bibitem{h-j} A. Haraux, M. Jendoubi, {\it Convergence of solutions of second-order gradient-like systems with analytic nonlinearities}, 
Journal of Differential Equations 144(2), 313--320, 1998

\bibitem{h-l-s-t} R. Hesse, D.R. Luke, S. Sabach, M.K. Tam, {\it Proximal heterogeneous block input-output method and application 
to blind ptychographic diffraction imaging}, arXiv:1408.1887v1, 2014

\bibitem{kurdyka1998} K. Kurdyka, {\it On gradients of functions definable in o-minimal structures}, 
Annales de l'institut Fourier (Grenoble) 48(3), 769--783, 1998

\bibitem{lojasiewicz1963} S. \L{}ojasiewicz, {\it Une propri\'{e}t\'{e} topologique des sous-ensembles analytiques r\'{e}els}, 
Les \'{E}quations aux D\'{e}riv\'{e}es Partielles, \'{E}ditions du Centre National de la Recherche Scientifique Paris, 87--89, 1963 

\bibitem{boris-carte} B. Mordukhovich, {\it Variational Analysis and Generalized Differentiation, I: Basic Theory, II: Applications}, 
Springer-Verlag, Berlin, 2006

\bibitem{ipiano} P. Ochs, Y. Chen, T. Brox, T. Pock, {\it iPiano: Inertial proximal algorithm for non-convex optimization}, SIAM Journal on 
Imaging Sciences 7(2), 1388--1419, 2014

\bibitem{rock-wets} R.T. Rockafellar, R.J.-B. Wets, {\it Variational Analysis}, Fundamental Principles of Mathematical Sciences 317, 
Springer-Verlag, Berlin, 1998

\bibitem{simon} L. Simon, {\it Asymptotics for a class of nonlinear evolution equations, with applications to geometric problems}, 
Annals of Mathematics (2) 118, 525--571, 1983

\end{thebibliography}
\end{document}